\documentclass[10pt]{amsart}
\usepackage{amssymb,amstext,amsmath,amscd,amsthm,amsfonts,enumerate,latexsym,stmaryrd,multicol,geometry,graphicx,mathrsfs}
\usepackage[usenames]{color}
\usepackage[all]{xy}
\newtheorem{thm}{Theorem}[section]
\newtheorem{lem}[thm]{Lemma}
\newtheorem{prop}[thm]{Proposition}
\newtheorem{cor}[thm]{Corollary}
\theoremstyle{definition}
\newtheorem{dfn}[thm]{Definition}
\newtheorem{ques}[thm]{Question}

\newtheorem{rem}[thm]{Remark}
\newtheorem{conv}[thm]{Convention}

\newtheorem{ex}[thm]{Example}

\theoremstyle{remark}

\newtheorem*{claim*}{Claim}
\newtheorem*{ac}{Acknowlegments}

\numberwithin{equation}{thm}
 \def\add{\operatorname{add}}
\def\db{\mathsf{D^b}}
\def\dbe{\mathsf{D}^\mathsf{b}_E}
\def\dpf{\mathsf{D^{pf}}}
\def\dpfe{\mathsf{D}^\mathsf{pf}_E}
\def\ds{\mathsf{D^{sg}}}
\def\dse{\mathsf{D}^\mathsf{sg}_E}
\def\E{\mathcal{E}}
\def\End{\operatorname{End}}
\def\Ext{\operatorname{Ext}}
\def\h{\operatorname{H}}
\def\Hom{\operatorname{Hom}}

\def\ipd{\operatorname{IPD}}
\def\lten{\otimes^{\mathbf{L}}}
\def\m{\mathfrak{m}}

\def\p{\mathfrak{p}}

\def\pd{\operatorname{pd}}
\def\rhom{\operatorname{\mathbf{R}Hom}}

\def\spcl{\operatorname{spcl}}
\def\spec{\operatorname{Spec}}
\def\ssupp{\operatorname{Supp^{sg}}}
\def\supp{\operatorname{Supp}}
\def\sing{\operatorname{Sing}}
\def\T{\mathcal{T}}

\def\thick{\operatorname{thick}}

\def\U{\mathcal{U}}
\def\V{\operatorname{V}}
\def\X{\mathcal{X}}
\def\xx{\boldsymbol{x}}
\def\Y{\mathcal{Y}}
\def\Z{\mathbb{Z}}
\def\ZZ{\mathcal{Z}}
\begin{document}
\title[Supports of intersections of thick subcategories]{Supports of intersections of thick subcategories\\
and proxy smallness}
\author{Kiriko Kato}
\address{Department of Mathematics, Graduate School of Science, Osaka Metropolitan University, 3-3-138, Sugimoto, Sumiyoshi, Osaka 558-8585, Japan}
\email{kiriko.kato@omu.ac.jp}
\author{Ryo Takahashi}
\address{Graduate School of Mathematics, Nagoya University, Furocho, Chikusaku, Nagoya 464-8602, Japan}
\email{takahashi@math.nagoya-u.ac.jp}
\urladdr{https://www.math.nagoya-u.ac.jp/~takahashi/}
\subjclass[2020]{13D09, 13H10}
\keywords{derived category, thick subcategory, support, perfect complex, proxy small subcategory, singularity category, singular support, complete intersection, hypersurface, locally dominant ring}
\thanks{Takahashi was partly supported by JSPS Grant-in-Aid for Scientific Research 23K03070}
\begin{abstract}
Let $R$ be a commutative noetherian ring.
Let $\db(R)$ be the bounded derived category of finitely generated $R$-modules.
Let $\X$ and $\Y$ be thick subcategories of $\db(R)$.
In this paper, we consider the question asking when the equality $\supp(\X\cap\Y)=\supp\X\cap\supp\Y$ holds, and give several answers.
As applications, we obtain a characterization of the proxy small subcategories, and classifications of certain thick subcategories.
\end{abstract}
\maketitle
\section{Introduction}

Let $R$ be a commutative noetherian ring.
Let $\db(R)$ be the bounded derived category of finitely generated $R$-modules, and $\dpf(R)$ the full subcategory of $\db(R)$ consisting of perfect complexes.
In the present paper, we raise and deal with the following quite natural question.

\begin{ques}\label{13}
Let $\X,\Y$ be thick subcategories of $\db(R)$.
When does the following equality hold true?
\begin{equation}\label{14}
\supp(\X\cap\Y)=\supp\X\cap\supp\Y.
\end{equation}
\end{ques}

Following \cite{proxy}, we say that a thick subcategory $\ZZ$ of $\db(R)$ is {\em proxy small} if there is an equality $\supp\ZZ=\supp(\ZZ\cap\dpf(R))$.
A typical example of a proxy small subcategory of $\db(R)$ is a thick subcategory of $\dpf(R)$.
The notion of proxy small subcategories corresponds to that of a proxy small complex introduced by Dwyer, Greenlees and Iyengar \cite{DGI} in the way that the proxy small complex is the complex whose generating thick subcategory is a proxy small subcategory.
Note by definition that, for $\Y=\dpf(R)$, those thick subcategories $\X$ of $\db(R)$ which satisfy the equality \eqref{14} are precisely the proxy small subcategories of $\db(R)$.

Applying the classical Hopkins--Neeman theorem \cite{H} and the relatively recent theorem of Pollitz \cite{P}, we are able to get several answers to Question \ref{13}.

\begin{thm}[Theorem \ref{25}]\label{22}
\begin{enumerate}[\rm(1)]
\item
The equality \eqref{14} holds provided that both $\X$ and $\Y$ are proxy small.
\item
Suppose that $R$ is a local ring.
Then the following three conditions are equivalent.
\begin{itemize}
\item
The ring $R$ is a complete intersection.
\item
The equality \eqref{14} holds for all thick subcategories $\X$ and $\Y$ of $\db(R)$.
\item
The equality \eqref{14} holds for all thick subcategories $\X$ of $\db(R)$ and $\Y=\dpf(R)$.
\end{itemize}
\end{enumerate}
\end{thm}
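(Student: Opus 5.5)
The inclusion $\supp(\X\cap\Y)\subseteq\supp\X\cap\supp\Y$ always holds, so in every case only the reverse inclusion needs proof. The plan is to reduce everything to perfect complexes and then use the Hopkins--Neeman theorem: thick subcategories of $\dpf(R)$ correspond bijectively to specialization-closed subsets of $\spec R$ via $\mathcal{C}\mapsto\supp\mathcal{C}$, with inverse $W\mapsto\{P\in\dpf(R)\mid\supp P\subseteq W\}$.

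For (1), set $\X'=\X\cap\dpf(R)$ and $\Y'=\Y\cap\dpf(R)$. These are thick subcategories of $\dpf(R)$, and proxy smallness gives $\supp\X'=\supp\X$ and $\supp\Y'=\supp\Y$. Since supports of subcategories are unions of supports of objects, both sets are specialization closed. Hence $W=\supp\X\cap\supp\Y$ is specialization closed.

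Given $\p\in W$, I would take the Koszul complex $K$ on a generating set of $\p$. It is perfect and satisfies $\supp K=\V(\p)\subseteq W$. By Hopkins--Neeman, $K$ lies in the thick subcategory of $\dpf(R)$ corresponding to $\supp\X'$, which is $\X'$, and likewise $K\in\Y'$. So $K\in\X\cap\Y$ and $\p\in\supp K$, which proves (1).

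For (2), the implication from the second condition to the third is trivial. For the third condition implies complete intersection, take $\X=\db(R)$. The equality \eqref{14} then says $\supp\db(R)=\spec R$ equals $\supp(\dpf(R))$. This holds automatically, since $R$ itself is perfect, so this choice of $\X$ gives no information.

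Instead I would take $\X=\thick(k)$, with $k$ the residue field, or more generally $\X=\thick(M)$ for suitable $M$. The equality then gives, for every $M\in\db(R)$, $\supp(\thick(M)\cap\dpf(R))=\supp M$, that is, every complex is proxy small. By the Dwyer--Greenlees--Iyengar/Pollitz theorem, this forces $R$ to be a complete intersection. Pollitz's theorem says precisely that a local ring is a complete intersection iff every object of $\db(R)$ is proxy small. In fact proxy smallness of $k$ alone suffices, via Pollitz's theorem that $\thick(k)$ contains a nonzero perfect complex... but the clean route is the statement ``all objects proxy small''.

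For complete intersection implies the second condition, Pollitz gives that every object, and hence every thick subcategory of $\db(R)$, is proxy small. This uses that $\supp\ZZ$ is the union of $\supp M$ over $M\in\ZZ$, and that $\thick(M)\cap\dpf\subseteq\ZZ\cap\dpf$. Part (1) then applies.

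I expect the main obstacle to be the precise formulation of Pollitz's result being invoked: it is an equivalence between the complete intersection property and proxy smallness of all objects of $\db(R)$. The object-wise versus subcategory-wise translation needs the union-of-supports observation above. Everything else is formal given Hopkins--Neeman.
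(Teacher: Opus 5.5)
Your argument is correct and is essentially the paper's: for (1) you run the Koszul-complex/Hopkins--Neeman argument of Proposition \ref{18} on $\X\cap\dpf(R)$ and $\Y\cap\dpf(R)$ and use proxy smallness exactly as in Theorem \ref{25}(1). For (2) you combine Pollitz's theorem with (1); your object-wise phrasing differs from Theorem \ref{23} only by the equality $\supp\thick_{\db(R)}M=\supp M$ and the equivalence between proxy smallness of $M$ and of $\thick_{\db(R)}M$. Drop the aside about $k$: over every local ring the Koszul complex on generators of $\m$ is a nonzero perfect complex in $\thick_{\db(R)}k$, so $k$ is always proxy small and $\X=\thick_{\db(R)}k$ gives no information, which is why you genuinely need to quantify over all $M$ (or all $\X$).
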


In the case where $\Y$ is contained in $\dpf(R)$, we obtain a criterion for the equality \eqref{14} to hold.

\begin{thm}[Theorem \ref{21}]\label{9}
Let $\X$ and $\Y$ be thick subcategories of $\db(R)$.
Suppose that $\Y$ is contained in $\dpf(R)$.
Then the equality \eqref{14} holds if and only if the following full subcategory of $\db(R)$ is triangulated.
$$
\X\ast\Y=\left\{T\in\db(R)\,\bigg|\,\begin{matrix}\text{there is an exact triangle $X\to T\to Y\rightsquigarrow$}\\
\text{in $\db(R)$ with $X\in\X$ and $Y\in\Y$}\end{matrix}\right\}\!.
$$
\end{thm}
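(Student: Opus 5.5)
We must prove: for $\Y\subseteq\dpf(R)$ thick, $\supp(\X\cap\Y)=\supp\X\cap\supp\Y$ if and only if $\X\ast\Y$ is triangulated, i.e.\ thick, equivalently $\X\ast\Y=\thick(\X\cup\Y)$. Since $\Y$ is a thick subcategory of $\dpf(R)$, the Hopkins--Neeman theorem gives $\Y=\{P\in\dpf(R)\mid\supp P\subseteq W\}$ with $W=\supp\Y$ specialization closed. Likewise $\X\cap\Y$ is a thick subcategory of $\dpf(R)$, so $\X\cap\Y=\{P\in\dpf(R)\mid \supp P\subseteq\supp(\X\cap\Y)\}$. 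Throughout I will use the Koszul complexes $K(\p)$ on generators of a prime $\p$ (more precisely on ideals $I$ with $\V(I)\subseteq W$). These lie in $\Y$ and have support $\V(I)$.

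\textbf{The equality implies that $\X\ast\Y$ is thick.} Assume the equality and put $V=\supp\X\cap W$. First I show $\X\ast\Y$ is closed under direct summands and shifts. Shifts are clear. For the cones, I would compare with the Verdier quotient $\db(R)/\Y$. Take $T\in\thick(\X\cup\Y)$; its image lies in $\thick(\overline{\X})$, the thick closure in the quotient. The key claim is that $\overline{\X}$, the essential image of $\X$, is already thick in $\db(R)/\Y$. Granting this, $T$ becomes isomorphic in the quotient to some $X\in\X$. Such an isomorphism is a roof $T\leftarrow S\to X$ whose cones lie in $\Y$. From this I extract a triangle $X'\to T\to Y\rightsquigarrow$ by an octahedral argument, using that $\X\ast\Y\ast\Y=\X\ast\Y$ and that $\Y\ast\X\subseteq\X\ast\Y$ (the latter is the part that needs care). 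To prove $\Y\ast\X\subseteq\X\ast\Y$, take a triangle $Y\to T\to X\to Y[1]$. The map $X\to Y[1]$ with $Y$ perfect and $\supp Y\subseteq W$ should factor, up to an element of $\X\cap\Y$, through the localization $X\to \Gamma$-type truncation. Concretely, I would use the equality $\supp(\X\cap\Y)=V$ to find $P\in\X\cap\Y$, e.g.\ a suitable Koszul object, with $P\otimes X\in\X$, through which the morphism factors after replacing $X$ by $X\otimes K$-type pieces. This is the step I expect to be the main obstacle: controlling morphisms $\X\to\Y[1]$ using only supports. My first attempt would be to argue locally at primes in $V$ versus outside $V$, since supports of $\X$ and $\Y$ meet only along $V$, where $\X\cap\Y$ supplies the Koszul objects.

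\textbf{$\X\ast\Y$ thick implies the equality.} Conversely suppose $\X\ast\Y$ is triangulated, and let $\p\in\supp\X\cap\supp\Y$. Choose $X\in\X$ with $\p\in\supp X$, and let $K=K(\p)\in\Y$. Then $X\otimes^{\mathbf L}_R K\in\thick(X)\ast\cdots$ lies in $\thick(\X\cup\Y)$ (indeed in the thick closure of $X$), so $X\otimes K\in\X$ already. Since $K$ is perfect and $\X$ is thick, $X\otimes K$ is built from $X$ by cones. This object has support $\supp X\cap\V(\p)\ni\p$ and is supported inside $W$. The remaining issue is to show it lies in the thick subcategory of $\dpf$, i.e.\ to produce a perfect object of $\X$ containing $\p$ in its support. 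For this I would use the thickness of $\X\ast\Y$ again: a perfect complex $P$ with $\supp P=\V(\p)$ lies in $\Y$, and I want to obtain $P\in\X\ast\Y$ forcing a triangle $X'\to P\to Y'$ whose localization at $\p$ shows $X'_\p\neq0$ with $X'$ perfect. Perfectness follows because $\Y\subseteq\dpf$ and $P$ is perfect, so $X'$ is perfect as a cone of perfects. Hence $X'\in\X\cap\dpf$. Tensoring $X'$ with $K(\p)$ then keeps it in $\X\cap\Y$ and $\p$ in its support, which gives $\p\in\supp(\X\cap\Y)$. The reverse inclusion is trivial.
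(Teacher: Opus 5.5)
Your proposal does not establish either implication. There is a genuine gap in each direction.

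\textbf{Equality $\Rightarrow$ $\X\ast\Y$ triangulated.} You correctly sense that everything reduces to one claim: each morphism $f\colon X\to Y$ with $X\in\X$, $Y\in\Y$ factors through an object of $\X\cap\Y$ (shifts are absorbed into $\Y$). Given that, your octahedral argument yields $\Y\ast\X\subseteq\X\ast\Y$. Hence $\X\ast\Y$ is extension-closed and so triangulated, and the detour through $\db(R)/\Y$ is unnecessary; its ``key claim'' that the image of $\X$ is thick is never justified and essentially restates the goal.

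But you leave exactly this factorization open. Neither Koszul complexes on primes nor objects of the form $P\lten_RX$ will produce it. The missing idea is a Koszul complex adapted to $X$ itself:
\begin{itemize}
\item Let $\xx=x_1,\dots,x_n$ generate the annihilator of the finitely generated module $\End_{\db(R)}(X)$, so $\supp X=\V(\xx)$, and let $K$ be the Koszul complex on $\xx$.
\item Then $Z=K\lten_RY$ lies in $\Y\subseteq\dpf(R)$, and $\supp Z=\supp X\cap\supp Y\subseteq\supp(\X\cap\Y)$. Hopkins--Neeman, applied to the thick subcategory $\X\cap\Y$ of $\dpf(R)$, gives $Z\in\X\cap\Y$.
\item Because every $x_i$ kills $1_X$, one has $K\lten_RX\cong\bigoplus_{i=0}^n(X[i])^{\oplus\binom{n}{i}}$, and this can be chosen so that the map induced by the truncation $K\to R[n]$ is the projection onto $X[n]$.
\item Composing a section $X[n]\to K\lten_RX$ with $K\lten_Rf$ and with $K\lten_RY\to Y[n]$ recovers $f[n]$. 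So $f$ factors through $Z[-n]\in\X\cap\Y$.
\end{itemize}
Note that it is $K\lten_RY$, not a tensor product with $X$, that must land in $\X\cap\Y$.

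\textbf{$\X\ast\Y$ triangulated $\Rightarrow$ equality.} The triangle $X'\to P\to Y'$ you want to use carries no information. Since $P$ already lies in $\Y$, the triangle $0\to P\to P\to 0$ is an admissible decomposition with $X'=0$. Nothing in the triangulatedness of $\X\ast\Y$ forces a decomposition with $X'_\p\ne0$. Triangulatedness of $\X\ast\Y$ only constrains morphisms from objects of $\X$ to objects of $\Y$, so you need an input guaranteeing that such morphisms are nonzero at $\p$; your argument contains none.

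The paper argues as follows. Triangulatedness of $\X\ast\Y$ is equivalent, by a lemma the paper quotes from the literature, to every morphism $X\to Y[i]$ with $X\in\X$, $Y\in\Y$ factoring through some $E\in\X\cap\Y$. Suppose $\p\in\supp X\cap\supp Y$ but $\p\notin\supp(\X\cap\Y)$. Then $E_\p=0$, so every such morphism localizes to zero. Since morphisms $X_\p\to Y_\p[i]$ in $\db(R_\p)$ are localizations of morphisms in $\db(R)$, this gives $\rhom_{R_\p}(X_\p,Y_\p)=0$. That contradicts the fact, proved as a lemma in the paper, that over a local ring $\rhom_R(X,Y)=0$ with $X,Y\in\db(R)$ forces $X=0$ or $Y=0$.
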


Restricting the above theorem to the case where $\Y=\dpf(R)$, we get a characterization of the proxy small subcategories.
This complements a result in \cite{proxy}, which essentially shows the ``only if'' part.

\begin{cor}[Corollary \ref{2}]\label{10}
A thick subcategory $\X$ is proxy small if and only if $\X\ast\dpf(R)$ is triangulated.
\end{cor}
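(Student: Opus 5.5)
This follows by specializing Theorem \ref{9} to $\Y=\dpf(R)$. The category $\dpf(R)$ is a thick subcategory of $\db(R)$ and is trivially contained in $\dpf(R)$, so Theorem \ref{9} applies to the pair $(\X,\dpf(R))$. It says that
$$\supp(\X\cap\dpf(R))=\supp\X\cap\supp\dpf(R)$$
holds if and only if $\X\ast\dpf(R)$ is triangulated. It remains to see that the left-hand equality is exactly the definition of proxy smallness, $\supp\X=\supp(\X\cap\dpf(R))$.

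For this, I will check that $\supp\dpf(R)=\spec R$. The complex $R$ is perfect, and $R_\p\ne0$ for every prime $\p$, so $\supp R=\spec R$. Hence
$$\supp\X\cap\supp\dpf(R)=\supp\X,$$
and the displayed equality above becomes $\supp(\X\cap\dpf(R))=\supp\X$, which is precisely the condition that $\X$ is proxy small. Combining this with the equivalence from Theorem \ref{9} gives the corollary.

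There is no genuine obstacle; the only points to verify are that $\dpf(R)$ is thick in $\db(R)$, which is standard, and the support computation. All the substance lies in Theorem \ref{9}. If I had to prove the ``if'' direction of Theorem \ref{9} in this special case, I would take $\p\in\supp\X$ and use the Hopkins--Neeman theorem to pick a Koszul complex $K$ on generators of $\p$. I would then try to show, using that $\X\ast\dpf(R)$ is closed under cones, that some object of $\X\cap\dpf(R)$ has $\p$ in its support.
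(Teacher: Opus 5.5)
Your argument is correct and is the paper's own: Corollary \ref{2} is obtained by applying Theorem \ref{21} with $\Y=\dpf(R)$, and Remark \ref{15} records the same observation, $\supp\dpf(R)=\spec R$, that turns the support equality into the definition of proxy smallness. Your closing speculative paragraph is not needed. For that direction, the paper does not use Koszul complexes: it passes to the factorization condition via \cite[Lemma 1.1]{JK}, localizes at $\p$ to get $\rhom_{R_\p}(X_\p,Y_\p)=0$, and then applies Lemma \ref{29}.
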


Following \cite{dlr}, we say that $R$ is locally dominant if for each prime ideal $\p$ and for every nonperfect $R_\p$-complex $X$, the residue field $\kappa(\p)$ belongs to the thick subcategory of $\db(R_\p)$ generated by $R_{\p} $ and $X$.
A ring which is locally a hypersurface (e.g., a local hypersurface) is a typical example of a locally dominant ring.

As an application of the above corollary, we get one-to-one correspondences between certain thick subcategories of $\db(R)$ and certain sets of prime ideals of $R$ when $R$ is locally dominant.

\begin{cor}[Theorems \ref{3} and \ref{6}]\label{11}
Suppose that $R$ is a locally dominant ring.
Let $\E$ be a thick subcategory of $\dpf(R)$, and set $E=\supp\E$.
Then there are one-to-one correspondences
$$
\left\{\begin{matrix}
\text{proxy small}\\
\text{subcategories}\\
\text{$\X$ of $\db(R)$ with}\\
\text{$\X\cap\dpf(R)=\E$}
\end{matrix}\right\}
=
\left\{\begin{matrix}
\text{thick subcategories}\\
\text{$\X$ of $\db(R)$}\\
\text{with $\E\subseteq\X$ and}\\
\text{$E=\supp\X$}
\end{matrix}\right\}
\cong
\left\{\begin{matrix}
\text{thick subcategories}\\
\text{$\Y$ of $\db(R)$}\\
\text{with $R\in\Y$ and}\\
\text{$\ipd(\Y)\subseteq E$}
\end{matrix}\right\}
\cong
\left\{\begin{matrix}
\text{specialization-closed}\\
\text{subsets of $\sing R$}\\
\text{contained in $E$}
\end{matrix}\right\}\!.
$$
\end{cor}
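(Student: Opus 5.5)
The plan is to establish the three correspondences separately. The first equality will follow from Hopkins--Neeman, the third from the classification of thick subcategories containing $R$ over a locally dominant ring, and the middle bijection from Corollary \ref{10}. Throughout I read $\ipd(\Y)$ as the set of primes $\p$ such that $Y_\p$ is not perfect for some $Y\in\Y$. I take this set to be specialization-closed and contained in $\sing R$.

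\emph{First equality.} Let $\X$ be proxy small with $\X\cap\dpf(R)=\E$. Then $\E\subseteq\X$ and $\supp\X=\supp(\X\cap\dpf(R))=E$. Conversely, let $\X$ be thick with $\E\subseteq\X$ and $\supp\X=E$. Then $\X\cap\dpf(R)$ is a thick subcategory of $\dpf(R)$ satisfying
$$E=\supp\E\subseteq\supp(\X\cap\dpf(R))\subseteq\supp\X=E.$$
By the Hopkins--Neeman theorem, thick subcategories of $\dpf(R)$ are determined by their supports, so $\X\cap\dpf(R)=\E$. It follows that $\X$ is proxy small.

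\emph{Third bijection.} Since $E$ is specialization-closed, I would use the classification from \cite{dlr} for locally dominant rings. There, $\Y\mapsto\ipd(\Y)$ and $W\mapsto\{M\in\db(R)\mid \ipd(M)\subseteq W\}$ are mutually inverse bijections between thick subcategories containing $R$ and specialization-closed subsets of $\sing R$. Restricting both sides to the condition "$\subseteq E$" gives the third bijection. If this classification is not quotable in this form, I would derive it from the definition of locally dominant in the standard way.

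\emph{Middle bijection.} Define
$$\Phi(\X)=\thick(\X\cup\{R\}),\qquad \Psi(\Y)=\{M\in\Y\mid\supp M\subseteq E\}.$$
Both maps are well defined:
\begin{itemize}
\item $\ipd(\Phi(\X))=\ipd(\X)\subseteq\supp\X=E$.
\item $\Psi(\Y)$ is thick, and it contains $\E$ because $\E\subseteq\dpf(R)\subseteq\Y$. Hence $\supp\Psi(\Y)=E$.
\end{itemize}

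To show $\Psi\Phi(\X)=\X$, note first that $\X$ is proxy small by the first step, so $\X\ast\dpf(R)$ is triangulated by Corollary \ref{10}. Hence $\Phi(\X)$ is the closure of $\X\ast\dpf(R)$ under direct summands. Take $T$ with $\supp T\subseteq E$ and an exact triangle $X\to T\to P\rightsquigarrow$ with $X\in\X$ and $P$ perfect. Then $\supp P\subseteq E$, so $P\in\E\subseteq\X$ by Hopkins--Neeman, and therefore $T\in\X$. The summand case is handled by applying the same argument to $T\oplus T'$. There we may arrange $\supp T'\subseteq E$, for instance by replacing $T'$ with $T'\oplus T'[1]$ (an Eilenberg swindle) or by tensoring with a Koszul object supported on $E$.

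To show $\Phi\Psi(\Y)=\Y$, observe that $\Phi\Psi(\Y)\subseteq\Y$ and both contain $R$. By the classification it therefore suffices to prove $\ipd(\Y)\subseteq\ipd(\Psi(\Y))$.

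\emph{Main obstacle.} The key remaining step is this inclusion. Fix $\p\in\ipd(\Y)\subseteq E$ and $M\in\Y$ with $M_\p$ nonperfect. Let $K$ be the Koszul complex on generators of $\p$. Then $K\otimes M\in\Y$, because $\Y$ is thick and contains $R$, so it is closed under tensoring with perfect complexes. Moreover $\supp(K\otimes M)\subseteq V(\p)\subseteq E$, so $K\otimes M\in\Psi(\Y)$. It remains to see that $(K\otimes M)_\p$ is not perfect. Since $R_\p$ is locally dominant, $\kappa(\p)\in\thick(R_\p,M_\p)$, and tensoring with $K_\p$ gives $K_\p\otimes\kappa(\p)\in\thick(K_\p,(K\otimes M)_\p)$. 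Here $K_\p\otimes\kappa(\p)$ is a nonzero direct sum of shifts of $\kappa(\p)$. If $(K\otimes M)_\p$ were perfect, then $\kappa(\p)$ would be perfect, contradicting $\p\in\sing R$. This is the step I would verify most carefully.
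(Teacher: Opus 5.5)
Your proof is correct up to one slip, noted in the second paragraph, and it takes a different route from the paper for the middle bijection.

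\textbf{Comparison of routes.} The first equality is essentially as in Theorem \ref{3}(2). The bijection with specialization-closed subsets is Theorem \ref{5}(1) restricted to $W\subseteq E$, as in the paper.

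The paper obtains the middle bijection by splicing:
\begin{enumerate}
\item It restricts the classification $\sigma,\tau$ of all proxy small subcategories (Theorem \ref{5}(2)) to pairs $(E,W)$.
\item It composes this with Theorem \ref{5}(1).
\item Only afterwards does it identify the resulting map $f(\X)=\{Y\mid\ipd(Y)\subseteq\ipd(\X)\}$ with $\thick_{\db(R)}(\X\cup\{R\})$, using injectivity of $g=\Psi$.
\end{enumerate}
You instead check $\Psi\Phi=1$ using only Corollary \ref{2} and Hopkins--Neeman. You check $\Phi\Psi=1$ using Theorem \ref{5}(1) and a Koszul localization argument. So Theorem \ref{5}(2) is never needed.

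What your route buys:
\begin{enumerate}
\item It is more self-contained, and Corollary \ref{2} does real work in it.
\item It shows that $\X\mapsto\thick_{\db(R)}(\X\cup\{R\})$ is injective on proxy small $\X$ with $\X\cap\dpf(R)=\E$ over an arbitrary ring. Local dominance enters only for surjectivity.
\end{enumerate}
The paper's route is shorter once the imported classification is granted, and it directly provides the explicit inverse $t(W)$.

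In your last step you could also argue as in Proposition \ref{27}. Since $\V(\p)\subseteq\ipd(\Y)$, Theorem \ref{5}(1) gives $R/\p\in\Y$, hence $R/\p\in\Psi(\Y)$, and $\p\in\ipd(R/\p)$.

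\textbf{The slip.} In the direct-summand step, replacing $T'$ by $T'\oplus T'[1]$ achieves nothing, since its support is still $\supp T'$. The swindle you want replaces $T'$ by $T[1]$:
\begin{enumerate}
\item $\X\ast\dpf(R)$ is triangulated and contains $T\oplus T'$.
\item Hence it contains the cone of $0_T\oplus 1_{T'}$, which is $T\oplus T[1]$.
\item This object has support $\supp T\subseteq E$, so it lies in $\X$ by your first case.
\item By thickness, $T\in\X$.
\end{enumerate}

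Your Koszul variant also works once made precise. Take $K$ to be the Koszul complex on generators of the annihilator of $\End_{\db(R)}(T)$. Then $K\lten_R(T\oplus T')$ lies in $\X\ast\dpf(R)$ and has support $\supp T\subseteq E$. Moreover, $T$ is a direct summand of $K\lten_RT$, as in the proof of Theorem \ref{21}.
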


\noindent
Here, $\sing R$ stands for the singular locus of $R$, while $\ipd(\Y)$ denotes the infinite projective dimension locus of $\Y$, that is, the set of prime ideals $\p$ of $R$ such that $\pd_{R_\p}Y_\p=\infty$ for some object $Y\in\Y$.
It would be worth emphasizing that the mutually inverse bijections in the above corollary are explicitly described.

The organization of this paper is as follows.
In Section \ref{7}, we recall basic notions, fundamental properties and some results for later use.
In Section \ref{12}, we provide several observations about Question \ref{13}, and prove Theorem \ref{22}.
In this section, we also deal with the singularity category version of \eqref{14}, i.e., the equality
$$
\ssupp(\X\cap\Y)=\ssupp\X\cap\ssupp\Y
$$
of singular supports of thick subcategories $\X,\Y$ of the singularity category $\ds(R)$ of $R$.
In Section \ref{8}, we consider characterizing the thick subcategories that satisfy \eqref{14}, and prove Theorem \ref{9} and Corollary \ref{10}.
In Section \ref{19}, we provide a lot of applications of Corollary \ref{10}.
In this section, we show Corollary \ref{11}.

\section{Preliminaries}\label{7}

This section is devoted to preliminary definitions and results, which will be used in the later sections.
Let us begin with stating our convention, which is valid throughout the present paper.

\begin{conv}
Throughout this paper, we assume that all subcategories are strictly full subcategories, and that all rings are commutative noetherian rings with identity.
Let $R$ be a (commutative noetherian) ring.
We denote by $\ds(R)$ the {\em singularity category} of $R$, that is to say, the Verdier quotient of the triangulated category $\db(R)$ by the triangulated subcategory $\dpf(R)$.
Also, $\sing R$ stands for the {\em singular locus} of $R$, namely, the set of prime ideals $\p$ of $R$ such that the localization $R_\p$ of $R$ at $\p$ is not a regular local ring.
\end{conv}

We recall some notions of subcategories of a triangulated category.

\begin{dfn}
\begin{enumerate}[(1)]
\item
A {\em thick} subcategory of a triangulated category $\T$ is by definition a triangulated subcategory of $\T$ closed under direct summands.
For an object $T\in\T$, we denote by $\thick_\T T$ the {\em thick closure} of $T$ in $\T$, that is, the smallest thick subcategory of $\T$ containing $X$.
Note that $\thick_{\db(R)}R=\dpf(R)$.
\item
Let $F:\T\to\U$ be an exact functor of triangulated categories.
Let $\X,\Y$ be subcategories of $\T,\U$ respectively.
The {\em essential image} $F\X$ of $\X$ by $F$ and the {\em inverse image} $F^{-1}\Y$ of $\Y$ by $F$ are defined by:
$$
F\X=\{FX\mid X\in\X\},\qquad
F^{-1}\Y=\{T\in\T\mid FT\in\Y\}.
$$
The essential image $F\X$ is a subcategory of $\U$, while the inverse image $F^{-1}\Y$ is a subcategory of $\T$.
Note that $F\X$ is closed under shifts when so is $\X$, and that $F^{-1}\Y$ is thick when so is $\Y$.
\item
Let $\T$ be a triangulated category.
Let $\X$ and $\Y$ be two subcategories of $\T$.
We define the {\em extension} of $\X$ and $\Y$ by the following equality; it is a subcategory of $\T$.
$$
\X\ast\Y=\{T\in\T\mid\text{there exists an exact triangle $X\to T\to Y\rightsquigarrow$ in $\T$ with $X\in\X$ and $Y\in\Y$}\}.
$$
\end{enumerate}
\end{dfn}

Next we recall the notions of supports and their variants.

\begin{dfn}
\begin{enumerate}[(1)]
\item
Let $T$ be an object $T$ of $\db(R)$ and $\X$ a subcategory of $\db(R)$.
We denote by $\supp T$ and $\supp\X$ the {\em supports} of $T$ and $\X$, that is,
$$
\supp T=\{\p\in\spec R\mid T_\p\ne0\text{ in }\db(R_\p)\},\qquad
\textstyle\supp\X=\bigcup_{X\in\X}\supp X.
$$
Also, we denote by $\ipd(T)$ and $\ipd(\X)$ the {\em infinite projective dimension loci} of $T$ and $\X$, that is,
$$
\ipd(T)=\{\p\in\spec R\mid T_{\p}\notin\dpf(R_{\p}) \},\qquad
\textstyle\ipd(\X)=\bigcup_{X\in\X}\ipd(X).
$$
\item
For an object $T$ of $\ds(R)$ and a subcategory of $\X$ of $\ds(R)$, we denote by $\ssupp T$ and $\ssupp\X$ the {\em singular supports} of $T$ and $\X$, that is,
$$
\ssupp T=\{\p\in\spec R\mid T_\p\ne0\text{ in }\ds(R_\p)\},\qquad
\textstyle\ssupp\X=\bigcup_{X\in\X}\ssupp X.
$$
Note that for an object $T$ of $\db(R)$ and a subcategory $\X$ of $\db(R)$ one has
$$
\ssupp(\pi T)=\ipd(T),\qquad\ssupp(\pi\X)=\ipd(\X),
$$
where $\pi:\db(R)\to\ds(R)$ denotes the canonical functor.
\end{enumerate}
\end{dfn}

The celebrated result below by Hopkins \cite{H} and Neeman \cite[Theorem 1.5]{N} will often be used in this paper.

\begin{thm}[Hopkins--Neeman]\label{17}
The assignments $\X\mapsto\supp\X$ and $\{X\in\db(R)\mid\supp X\subseteq W\}\mapsfrom W$ give mutually inverse bijections
$$
\{\text{thick subcategories of $\dpf(R)$}\}
\rightleftarrows\spcl(\spec R).
$$
\end{thm}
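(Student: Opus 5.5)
The statement is the classical theorem of Hopkins and Neeman, so I would reproduce the standard argument. In this plan, for $W\in\spcl(\spec R)$ the subcategory $\{X\mid\supp X\subseteq W\}$ is understood inside $\dpf(R)$, which is how the bijection is meant.

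\emph{Well-definedness.} For a perfect complex $X$, $\supp X$ is the union of the supports of its finitely many finitely generated homology modules. Hence it is closed, and so $\supp\X$ is specialization-closed for any subcategory $\X$ of $\dpf(R)$. Conversely, $\{X\in\dpf(R)\mid\supp X\subseteq W\}$ is thick, because localization is exact and $\supp$ satisfies the two-out-of-three property on triangles and passes to summands. It is also clear that $\supp\{X\mid\supp X\subseteq W\}=W$: for each $\p\in W$, the Koszul complex on generators of $\p$ is perfect, has support $V(\p)\subseteq W$, and contains $\p$ in its support. So one composite is the identity.

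\emph{Injectivity, setup.} It remains to show that every thick subcategory $\X$ of $\dpf(R)$ equals $\{Y\in\dpf(R)\mid\supp Y\subseteq\supp\X\}$. First, $\X$ is automatically a tensor ideal. Indeed, for $X\in\X$ the class $\{P\in\dpf(R)\mid P\lten X\in\X\}$ is thick and contains $R$, so it is all of $\dpf(R)$. Now take a perfect $Y$ with $\supp Y\subseteq\supp\X$. The set $\supp Y$ is closed, say $V(I)$ with $I=(a_1,\dots,a_n)$. Its finitely many minimal primes each lie in $\supp X_i$ for some $X_i\in\X$, and since the supports $\supp X_i$ are closed, $\supp Y\subseteq\supp X$ where $X=\bigoplus X_i\in\X$. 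So it suffices to prove: \emph{if $X,Y$ are perfect with $\supp Y\subseteq\supp X$, then $Y\in\thick X$.}

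\emph{Injectivity, two steps.} Let $K$ be the Koszul complex on generators of an ideal $J$ with $V(J)=\supp X$.
\begin{enumerate}[(a)]
\item \emph{$Y\in\thick K$ whenever $\supp Y\subseteq V(J)$.} The plan is to use the triangles $Y\xrightarrow{a^m}Y\to Y\lten K(a^m)\rightsquigarrow$ for $a\in J$. Since $J$ acts nilpotently on $\h(Y)$, multiplication by $a^m$ on $Y$ is nilpotent in $\db(R)$ for $m\gg0$; a bounded complex of finitely generated modules is killed by a power of the annihilator ideal of its homology. It follows that $Y$ is a direct summand of $Y\lten K(a^m)$. Iterating over the generators of $J$, $Y$ is a summand of $Y\lten K(a_1^{m},\dots,a_n^{m})$. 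This lies in $\thick K(a_1^m,\dots)$ by the tensor-ideal property, and $K(a_1^m,\dots,a_n^m)\in\thick K$ by the standard octahedral argument relating $K(a^m)$ to $K(a)$.
\item \emph{$K\in\thick X$.} This is the main obstacle, and here I would invoke Hopkins' nilpotence lemma. A morphism $f:A\to B$ in $\dpf(R)$ such that $f\otimes\kappa(\p)=0$ for all $\p$ is $\otimes$-nilpotent. One applies this to the cone $f:F\to R$ of the coevaluation $R\to X\lten X^\vee$ (or the corresponding map built from $X$), localized away from $\supp X$, and tensored with $K$. Outside $\supp X$ the complex $K$ is zero, and on $\supp X$ the map $R\to X\otimes X^\vee\otimes\kappa(\p)$ is split injective since $X\otimes\kappa(\p)\ne0$ over a field. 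Hence $f\otimes K$ vanishes on all residue fields, so $f^{\otimes N}\otimes K=0$. An octahedral induction then shows $K$ is a summand of an object in $\thick(X\lten X^\vee\lten\cdots\lten K)\subseteq\thick X$, using the tensor-ideal property again.
\end{enumerate}

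Combining (a) and (b) gives $Y\in\thick X\subseteq\X$, which proves injectivity, so the two assignments are mutually inverse. The delicate part is the nilpotence lemma itself. I would prove it by reducing to the case where $f$ is a map of bounded complexes of free modules, and inducting on the dimension of $\supp$ by noetherian induction, with localization at minimal primes handled via the residue-field hypothesis.
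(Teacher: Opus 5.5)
The paper gives no proof of this result---it is quoted from Hopkins and Neeman---and your outline is a correct reconstruction of their standard argument: the tensor-ideal reduction to $Y\in\thick X$, Koszul objects, and the $\otimes$-nilpotence lemma applied to the fibre $f\colon F\to R$ of the coevaluation $R\to X\lten X^\vee$. The one step to tighten is (b): the nilpotence lemma applied to $f\otimes 1_K$ gives $f^{\otimes N}\otimes 1_{K^{\otimes N}}=0$ rather than $f^{\otimes N}\otimes 1_K=0$, and you recover the latter because $K$ is a direct summand of $K^{\otimes N}$, by the splitting $K\lten_R K\cong\bigoplus_{i=0}^n(K[i])^{\oplus\binom{n}{i}}$ (each generator acts null-homotopically on $K$), which is the same splitting used in the proof of Theorem~\ref{21}.
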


In \cite{proxy} the following notion has been introduced, which will play a main role in this paper.

\begin{dfn}
A thick subcategory $\X$ of $\db(R)$ is called {\em proxy small} if $\supp\X=\supp(\X\cap\dpf(R))$.
\end{dfn}

The remark below contains the reason why such a subcategory is called proxy small.

\begin{rem}
An object $X$ of $\db(R)$ is {\em proxy small} in the sense of Dwyer, Greenlees and Iyengar \cite{DGI} if and only if $\thick_{\db(R)}X$ is a proxy small subcategory of $\db(R)$.
For the details, see \cite[Proposition 3.3]{proxy}.
\end{rem}

The following theorem is given in \cite[Proposition 3.4]{proxy}, which is essentially due to Pollitz \cite[Theorem 5.2]{P}.

\begin{thm}[Pollitz]\label{23}
Suppose that the ring $R$ is local.
Then $R$ is a complete intersection if and only if every thick subcategory of $\db(R)$ is proxy small.
\end{thm}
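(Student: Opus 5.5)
The plan is to reduce the statement to the object-wise version of Pollitz's result, namely \cite[Theorem 5.2]{P}. That theorem says that a local ring $R$ is a complete intersection if and only if every object of $\db(R)$ is proxy small in the sense of Dwyer, Greenlees and Iyengar. Via the preceding Remark (that is, \cite[Proposition 3.3]{proxy}), an object $X$ is proxy small exactly when $\thick_{\db(R)}X$ is a proxy small subcategory. So what remains is to pass between the object-wise statement and the statement about all thick subcategories.

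For the ``if'' direction, assume every thick subcategory of $\db(R)$ is proxy small. Then for each $X\in\db(R)$ the subcategory $\thick_{\db(R)}X$ is proxy small, so by the Remark every object of $\db(R)$ is proxy small. Pollitz's theorem then gives that $R$ is a complete intersection.

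For the ``only if'' direction, assume $R$ is a complete intersection and let $\X$ be any thick subcategory of $\db(R)$. The inclusion $\supp(\X\cap\dpf(R))\subseteq\supp\X$ is trivial, so only the reverse inclusion needs an argument.
\begin{enumerate}[(1)]
\item Take $\p\in\supp\X$ and choose $X\in\X$ with $\p\in\supp X$.
\item By Pollitz's theorem $X$ is proxy small, so by the Remark $\thick_{\db(R)}X$ is a proxy small subcategory.
\item Hence $\p\in\supp X\subseteq\supp\thick_{\db(R)}X=\supp(\thick_{\db(R)}X\cap\dpf(R))$.
\item Since $\thick_{\db(R)}X\subseteq\X$, we get $\supp(\thick_{\db(R)}X\cap\dpf(R))\subseteq\supp(\X\cap\dpf(R))$, so $\p$ lies in the latter.
\end{enumerate}
Thus $\X$ is proxy small. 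The argument only uses that supports of subcategories are unions of supports of their objects and are monotone under inclusion.

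The genuine mathematical content, and hence the main obstacle, lies entirely in Pollitz's object-wise theorem. In the complete intersection case, one shows that any $X$ with $\p\in\supp X$ finitely builds a perfect complex with the same support, for example a Koszul-type complex on generators of an ideal defining $\supp X$. This uses the cohomological support varieties of complete intersections, which detect exactly when Koszul objects lie in $\thick X$. In the non-complete-intersection case, one must produce a complex that is not proxy small, and Pollitz does this using the structure of the homotopy Lie algebra of $R$. I would not reprove these facts but cite \cite[Theorem 5.2]{P} for them, so that the proof here consists only of the formal reduction above.
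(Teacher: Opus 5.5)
Your proposal is correct and follows essentially the same route as the paper, which gives no independent argument: it cites \cite[Proposition 3.4]{proxy} and notes that the statement is essentially Pollitz's object-wise characterization \cite[Theorem 5.2]{P}. Your reduction, which translates between proxy small objects and proxy small thick closures via the preceding Remark and then passes from $\thick_{\db(R)}X$ to an arbitrary thick subcategory $\X$ by monotonicity of supports, is exactly the formal content of that citation.
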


Let us recall the definition of specialization-closed subsets, which are basic tools to classify subcategories.

\begin{dfn}
Let $X$ be a topological space.
A subset $A$ of $X$ is called {\em specialization-closed} if for every $a\in A$ the closure $\overline{\{a\}}$ is contained in $A$.
Denote by $\spcl X$ the set of specialization-closed subsets of $X$.
Note that the specialization-closed subsets of $X$ are precisely the (possibly infinite) unions of closed subsets of $X$.
\end{dfn}

Now we recall the definition of locally dominant rings, which frequently appear in this paper.

\begin{dfn}
\begin{enumerate}[(1)]
\item
Suppose that $R$ is a local ring.
We say that $R$ is {\em dominant} if, for every nonzero object $X$ of $\ds(R)$, the residue field of $R$ belongs to the thick closure $\thick_{\ds(R)}X$ of $X$ in $\ds(R)$. 
\item
We say that $R$ is {\em locally dominant} if for every prime ideal $\p$ of $R$ the local ring $R_\p$ is dominant.
\end{enumerate}
A typical example of a locally dominant ring is a ring which is locally a hypersurface; see \cite[Proposition 5.3]{dlr}.
In particular, every local ring that is a hypersurface is a locally dominant ring.
\end{dfn}

To state the next theorem, we prepare some notations.

\begin{dfn}
Let $\X$ be a proxy small subcategory of $\db(R)$.
Then, in view of \cite[Lemma 4.3]{proxy}, the Verdier quotient $\X/\X\cap\dpf(R)$ is naturally viewed as a full triangulated subcategory of $\ds(R)$, i.e., $\X/\X\cap\dpf(R)=\pi\X$, where $\pi:\db(R)\to\ds(R)$ stands for the canonical functor.
We set
$$
\sigma(\X)=(\supp\X,\,\ssupp(\X/\X\cap\dpf(R)))=(\supp\X,\,\ssupp(\pi\X)).
$$
Let $(V,W)$ be a pair of sets $V,W$ of prime ideals of $R$.
We define the subcategory $\tau(V,W)$ of $\db(R)$ by
$$
\tau(V,W)=\{X\in\db(R)\mid\supp X\subseteq V\text{ and }\ssupp(\pi X)\subseteq W\}.
$$
\end{dfn}

The following theorem is the combination of the main results of \cite{dlr,proxy}.
For the details, we refer the reader to \cite[Remark 10.13]{dlr} and \cite[Theorem 4.6]{proxy}.

\begin{thm}[Takahashi]\label{5}
Let $R$ be locally dominant.
Then the following assertions hold true.
\begin{enumerate}[\rm(1)]
\item
The assignments $\X\mapsto\ipd(\X)$ and $\{X\in\db(R)\mid\ipd(X)\subseteq W\}\mapsfrom W$ give mutually inverse bijections
$$
\{\text{thick subcategories of $\db(R)$ containing $R$}\}
\rightleftarrows\spcl(\sing R).
$$
\item
The assignments $\sigma,\tau$ give mutually inverse bijections
$$
\xymatrix{
{\left\{\begin{matrix}\text{proxy small}\\
\text{subcategories of $\db(R)$}\end{matrix}\right\}}
\ar@<.7mm>[r]^-\sigma& 
{\left\{(V,W)\,\bigg|\,\begin{matrix}\text{$V\in\spcl(\spec R)$ and}\\
\text{$W\in\spcl(\sing R)$ with $V\supseteq W$}
\end{matrix}\right\}\!.}
\ar@<.7mm>[l]^-\tau}
$$
\end{enumerate}
\end{thm}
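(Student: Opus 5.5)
The statement combines results from \cite{dlr,proxy}. I would prove (1) by passing to the singularity category, and then deduce (2) from (1), the Hopkins--Neeman theorem (Theorem~\ref{17}) and proxy smallness.

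For (1), the first step is the standard reduction. Thick subcategories of $\db(R)$ containing $R$ correspond bijectively to thick subcategories of $\ds(R)$, via $\X\mapsto\pi\X$ and $\mathcal{S}\mapsto\pi^{-1}\mathcal{S}$. Under this correspondence $\ipd(\X)=\ssupp(\pi\X)$. It therefore suffices to show that $\mathcal{S}\mapsto\ssupp\mathcal{S}$ and $W\mapsto\{T\mid\ssupp T\subseteq W\}$ are mutually inverse between thick subcategories of $\ds(R)$ and $\spcl(\sing R)$. This splits into three checks.
\begin{itemize}
\item Well-definedness: $\ssupp T$ is specialization-closed and lies in $\sing R$. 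Over a regular local ring every complex is perfect, so it lies in $\sing R$. It is specialization-closed because finite projective dimension localizes.
\item Surjectivity: for $\p\in\sing R$ the module $R/\p$ satisfies $\ipd(R/\p)=V(\p)\cap\sing R$. Taking sums of such objects realizes every $W\in\spcl(\sing R)$.
\item Injectivity: if $\ssupp T\subseteq\ssupp\mathcal{S}$, then $T\in\mathcal{S}$.
\end{itemize}
Injectivity is the main obstacle. My plan is a local-to-global principle: $T\in\thick_{\ds(R)}\mathcal{S}$ as soon as $T_\p\in\thick_{\ds(R_\p)}\mathcal{S}_\p$ for all $\p$. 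I would prove this by noetherian induction on $\ssupp T$, gluing via Koszul-type objects or the approach of \cite{dlr}. Locally, dominance of $R_\p$ gives $\kappa(\p)\in\thick(S_\p)$ for any $S$ with $\p\in\ssupp S$. One then builds $T_\p$ from residue fields by induction on the dimension of $\ssupp T_\p$, using that $\kappa(\mathfrak q)$ for $\mathfrak q\subseteq\p$ is obtained after localizing.

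For (2), I would start with the easy properties of $\sigma$ and $\tau$.
\begin{itemize}
\item $\sigma$ lands in the target: $\supp\X$ is specialization-closed, $\ssupp(\pi\X)=\ipd(\X)\in\spcl(\sing R)$ by (1), and $\ipd(\X)\subseteq\supp\X$.
\item $\tau(V,W)$ is thick, since supports and $\ipd$ are subadditive on triangles and preserved by summands.
\item $\tau(V,W)$ is proxy small with $\supp\tau(V,W)=V$. For $\p\in V$, a Koszul complex on generators of $\p$ is perfect with support $V(\p)\subseteq V$, and it lies in $\tau(V,W)$ because its $\ipd$ is empty. So $V\subseteq\supp(\tau(V,W)\cap\dpf(R))$, and the reverse inclusions are clear.
\item $\ssupp(\pi\tau(V,W))=W$: for $\p\in W$ we have $R/\p\in\tau(V,W)$, because $V(\p)\subseteq V$ (as $W\subseteq V$ and $V$ is specialization-closed) and $\ipd(R/\p)=V(\p)\cap\sing R\subseteq W$.
\end{itemize}
Together these give $\sigma\tau=\mathrm{id}$.

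For $\tau\sigma=\mathrm{id}$, the inclusion $\X\subseteq\tau\sigma(\X)$ is trivial. Conversely, let $T$ satisfy $\supp T\subseteq\supp\X=:V$ and $\ipd T\subseteq\ipd\X$. By (1) applied in $\ds(R)$, we get $\pi T\in\thick\pi\X$. By \cite[Lemma 4.3]{proxy}, $\pi\X=\X/(\X\cap\dpf(R))$ is thick in $\ds(R)$, so $\pi T\cong\pi X'$ for some $X'\in\X$. This isomorphism is realized by a roof $T\leftarrow Z\to X'$ whose cones $P,Q$ are perfect. Their supports lie in $\supp T\cup\supp X'\subseteq V$. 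By proxy smallness and Theorem~\ref{17}, every perfect complex supported in $V=\supp(\X\cap\dpf(R))$ lies in $\X\cap\dpf(R)$, so $P,Q\in\X$. Two-out-of-three along the triangles then gives first $Z\in\X$ and then $T\in\X$.
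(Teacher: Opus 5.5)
The paper gives no proof of this statement; it quotes \cite[Remark 10.13]{dlr} and \cite[Theorem 4.6]{proxy}. Your reduction of (1) to $\ds(R)$, your well-definedness and surjectivity checks, and your proof of $\sigma\tau=\mathrm{id}$ in (2) are fine. There are, however, two genuine gaps.

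\emph{Injectivity in (1).} This step is the whole content of the classification in \cite{dlr}, and your plan for it does not close.

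\begin{itemize}
\item The local-to-global principle you state requires $T_\p\in\thick\mathcal S_\p$ for every $\p$. At the maximal ideal of $R_\p$ this is exactly the original problem for the local ring $R_\p$, so the principle reduces nothing.
\item Dominance supplies only $\kappa(\p)$. The residue fields $\kappa(\mathfrak q)$ with $\mathfrak q\subsetneq\p$ are not objects of $\ds(R_\p)$. To ``build $T_\p$ from residue fields'' you would have to use objects such as $R_\p/\mathfrak qR_\p$, and putting these into $\thick\mathcal S_\p$ via your principle again requires the statement at $\p$ itself.
\end{itemize}

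The missing ingredient is a gluing lemma in which the closed point is handled by the residue field alone. For example: if $(A,\mathfrak n)$ is local, $\mathcal S\subseteq\ds(A)$ is thick with $\kappa(\mathfrak n)\in\mathcal S$, and $T_{\mathfrak q}\in\thick\mathcal S_{\mathfrak q}$ for all $\mathfrak q\ne\mathfrak n$, then $T\in\mathcal S$. Given this lemma, induction on the height of $\p$ yields $T_\p\in\thick\mathcal S_\p$ whenever $\ssupp T\subseteq\ssupp\mathcal S$. Proving the lemma takes real work. One route is to show that a power of $\mathfrak n$ kills $1_T$ in $\ds(A)/\mathcal S$, which needs a comparison of Hom-sets in $\ds$ and in Verdier quotients with their localizations. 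One then uses a Koszul complex on $\mathfrak n$ to land in $\thick\kappa(\mathfrak n)$. Nothing of this kind is in your proposal.

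\emph{The step $\tau\sigma=\mathrm{id}$ in (2).} Two of your claims are false.

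First, \cite[Lemma 4.3]{proxy} only makes $\pi\X$ a triangulated subcategory of $\ds(R)$, and $\pi\X$ need not be thick. Here is a counterexample.
\begin{itemize}
\item Take $R=k[\![x,y]\!]/(xy)$ and $\X=\{X\in\db(R)\mid\supp X\subseteq\{\m\}\}$. This $\X$ is proxy small, since it contains the Koszul complex on $x+y$.
\item Multiplication by $y$ on $R/(x)$ factors through $R$, via $\bar r\mapsto ry$ followed by the projection. So the exact sequence $0\to R/(x)\xrightarrow{y}R/(x)\to k\to0$ gives $\pi k\cong\pi(R/(x))\oplus\pi(R/(x))[1]$.
\item Let $\ell$ denote length over the fields $R_{(x)}$ and $R_{(y)}$, and set $\lambda(X)=\sum_i(-1)^i\bigl(\ell\,\h^i(X)_{(x)}-\ell\,\h^i(X)_{(y)}\bigr)$. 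This $\lambda$ is additive on triangles. Since $\lambda(R)=0$ and $R$ is local, it vanishes on perfect complexes. Hence $\lambda$ is invariant under isomorphism in $\ds(R)$.
\item $\lambda$ vanishes on $\X$, but $\lambda(R/(x))=1$. So $\pi(R/(x))$ is a summand of an object of $\pi\X$ without lying in $\pi\X$.
\end{itemize}

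Second, the cones of a roof need not be supported in $\supp T\cup\supp X'$. The roof $T\leftarrow T\oplus R[-1]\to T$ represents $1_{\pi T}$, and both of its cones are $R$.

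Your conclusion is still true, but only because $\supp T\subseteq V$, and the argument has to use this. The fix is the Koszul trick from the proof of Theorem~\ref{21}.
\begin{itemize}
\item By (1), $T\in\thick_{\db(R)}(\X\cup\{R\})$.
\item Let $K$ be the Koszul complex on generators $\xx$ of $\mathrm{ann}_R\End_{\db(R)}(T)$. Then $\V(\xx)=\supp T\subseteq V$, and $T$ is a direct summand of $K\lten_RT$.
\item The functor $K\lten_R-$ maps $\X$ into $\X$. It maps $R$ to $K$, a perfect complex supported in $V=\supp(\X\cap\dpf(R))$, so $K\in\X$ by Theorem~\ref{17}.
\item Hence $K\lten_R-$ maps $\thick(\X\cup\{R\})$ into $\X$. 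Therefore $K\lten_RT\in\X$, and so $T\in\X$.
\end{itemize}
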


\section{Cases where the support equality holds or not}\label{12}

In this section, we seek answers to Question \ref{13}, applying results from the literature.
We present examples where the equality \eqref{14} holds and ones where it fails.
We begin with remarking that the definition of proxy smallness is interpreted as a special case of the equality \eqref{14}.

\begin{rem}\label{15}
In the case where $\Y=\dpf(R)$, the equality \eqref{14} holds if and only if the subcategory $\X$ is proxy small.
This follows from the definition of proxy smallness and the fact that $\supp\dpf(R)=\spec R$.
\end{rem}

The following proposition is an immediate consequence of Remark \ref{15} and Theorem \ref{23}.

\begin{prop}\label{24}
Let $R$ be a local ring, and let $\Y=\dpf(R)$.
Then $R$ is a complete intersection if and only if the equality \eqref{14} holds for all thick subcategories $\X$ of the derived category $\db(R)$.
\end{prop}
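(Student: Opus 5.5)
The plan is to reduce Proposition \ref{24} to the two results already available: Remark \ref{15}, which translates the equality \eqref{14} for $\Y=\dpf(R)$ into proxy smallness, and Pollitz's Theorem \ref{23}. Before invoking them, check that $\Y=\dpf(R)=\thick_{\db(R)}R$ is a thick subcategory of $\db(R)$, so that \eqref{14} makes sense for this choice of $\Y$.

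Next, fix an arbitrary thick subcategory $\X$ of $\db(R)$. The support $\supp\dpf(R)$ is all of $\spec R$, because $R\in\dpf(R)$ and $R_\p\ne0$ for every prime $\p$. Hence the right-hand side of \eqref{14} equals $\supp\X\cap\spec R=\supp\X$. The equality \eqref{14} therefore reads $\supp(\X\cap\dpf(R))=\supp\X$, which is exactly the definition of $\X$ being proxy small; this is the content of Remark \ref{15}.

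It follows that the condition ``the equality \eqref{14} holds for all thick subcategories $\X$ of $\db(R)$, with $\Y=\dpf(R)$'' is equivalent to ``every thick subcategory of $\db(R)$ is proxy small''. Since $R$ is local, Theorem \ref{23} says this last condition is equivalent to $R$ being a complete intersection, which gives both implications at once.

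There is no real obstacle in this argument: all the depth is contained in Pollitz's theorem, which we take as given. The only point needing care is the identity $\supp\dpf(R)=\spec R$, and that follows immediately from $R\in\dpf(R)$.
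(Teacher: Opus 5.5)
Your proposal is correct and follows the paper's route: the paper states this proposition as an immediate consequence of Remark~\ref{15} (the equality \eqref{14} with $\Y=\dpf(R)$ is exactly proxy smallness, since $\supp\dpf(R)=\spec R$) combined with Pollitz's Theorem~\ref{23}. Your argument spells out precisely these two steps.
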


The proposition below gives rise to a condition for \eqref{14} not to be satisfied.

\begin{prop}\label{16}
Let $R$ be a non-Gorenstein local ring.
Then the following two statements hold true.
\begin{enumerate}[\rm(1)]
\item
Let $\X,\Y$ be nonzero thick subcategories of the derived category $\db(R)$.
Assume that every object in $\X$ has finite injective dimension and every object in $\Y$ has finite projective dimension.
Then \eqref{14} fails.
\item
Let $\X$ be the subcategory of $\db(R)$ consisting of complexes of finite injective dimension.
Then the equality \eqref{14} fails for every nonzero thick subcategory $\Y$ of $\dpf(R)$.
\end{enumerate}
\end{prop}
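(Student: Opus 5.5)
For (1) I will show that $\X\cap\Y=0$, so that $\supp(\X\cap\Y)=\emptyset$. On the other hand, $\supp\X$ and $\supp\Y$ both contain the maximal ideal $\m$, so their intersection is nonempty and \eqref{14} fails.

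First, nonzero thick subcategories contain $\m$ in their support. Any nonzero $T\in\db(R)$ has $\supp T$ nonempty and specialization-closed (it is the union of the closed sets $\supp \h^i(T)$). Hence $\m\in\supp T$, and so $\m\in\supp\X\cap\supp\Y$.

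Second, I show that an object $T\in\X\cap\Y$ must vanish. Such a $T$ has both finite projective and finite injective dimension. The plan is to invoke the standard fact that a local ring admitting a nonzero complex in $\db(R)$ of finite projective and finite injective dimension is Gorenstein. This is a classical consequence of the New Intersection Theorem (Foxby, Roberts; see e.g. Christensen's book or Avramov--Foxby). If I want a more hands-on reduction, I would argue as follows:
\begin{itemize}
\item Since $T$ is perfect, $T\cong\rhom_R(T^*,R)$ with $T^*=\rhom_R(T,R)$ perfect.
\item Finite injective dimension of $T$ gives that $\rhom_R(k,T)\cong\rhom_R(k,R)\lten_R T$ is bounded.
\item Since $T$ is a nonzero perfect complex, the tensor product $\rhom_R(k,R)\lten_R T\cong \rhom_R(k,R)\lten_k (k\lten_R T)$ is bounded only if $\rhom_R(k,R)$ is bounded, using $k\lten_R T\neq 0$ by Nakayama.
\item Boundedness of $\rhom_R(k,R)$ means $R$ has finite injective dimension, i.e.\ $R$ is Gorenstein.
\end{itemize}
The step I expect to need care is the tensor identification in the third item. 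Here $\rhom_R(k,R)$ is a complex of $k$-vector spaces up to quasi-isomorphism (a DG-module over $\rhom_R(k,k)$), so its tensor with $T$ splits as copies of shifts of $\rhom_R(k,R)$ indexed by $\h(k\lten_R T)$. Since $R$ is non-Gorenstein, this gives a contradiction, and therefore $\X\cap\Y=0$.

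For (2), the subcategory $\X$ is thick, and it contains the dualizing complex when one exists. In general I would use a nonzero object instead: for example, $\rhom_R(K,E)$ for a Koszul complex $K$ and the injective hull $E$ of $k$ has finite length homology and finite injective dimension. So $\X\neq 0$. Every object of $\Y\subseteq\dpf(R)$ has finite projective dimension, so (1) applies directly. The main obstacle is only the Gorenstein criterion in (1); I would cite it rather than reprove it.
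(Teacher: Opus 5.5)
Your proposal is correct and follows the paper's proof. In (1), $\m$ lies in both supports, and a nonzero object of $\X\cap\Y$ would have finite projective and finite injective dimension, forcing $R$ to be Gorenstein. In (2), the Matlis dual of the Koszul complex on generators of $\m$ shows $\X\neq0$.

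The one difference is that the paper just cites Dwyer--Greenlees--Iyengar (Theorem 5.3) for the Gorenstein criterion. Your sketch via $\rhom_R(k,T)\cong\rhom_R(k,R)\lten_R T$ is a valid elementary proof of it, once you take $\Hom_R(k,I)$, with $I$ an injective resolution of $R$, as the $k$-linear model of $\rhom_R(k,R)$. So no appeal to the New Intersection Theorem is needed.
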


\begin{proof}
(1) As $\X$ and $\Y$ are nonzero, the maximal ideal $\m$ of $R$ belongs to both $\supp\X$ and $\supp\Y$.
Suppose that \eqref{14} holds.
Then $\m$ belongs to $\supp(\X\cap\Y)$, which means $\X\cap\Y$ is nonzero.
We can choose a nonzero object $Z\in\X\cap\Y$.
By assumption, $Z$ has finite projective dimension and finite injective dimension.
It follows from \cite[Theorem 5.3]{DGI} that the local ring $R$ is Gorenstein.
This contradiction completes the proof.

(2) In view of (1), it is enough to verify that $\X$ is nonzero.
Let $K$ be the Koszul complex of a system of generators of the maximal ideal $\m$ of $R$.
Let $I$ be the Matlis dual of $K$.
Then each component of the bounded complex $I$ is an injective $R$-module, so $I$ has finite injective dimension.
Each cohomology of the complex $I$ is a finite-dimensional vector space over the residue field $R/\m$, and in particular, it is a finitely generated $R$-module.
Thus $I$ is isomorphic to an object $J$ of $\db(R)$.
Therefore $\X$ contains the nonzero object $J$.
\end{proof}

So far, whenever we consider a pair of thick subcategories of $\db(R)$ that does not satisfy \eqref{14}, either of those two subcategories is contained in $\dpf(R)$.
Let us construct an example of a pair of thick subcategories of $\db(R)$ which neither satisfies \eqref{14} nor fits inside $\dpf(R)$.

\begin{ex}
Let $R=k[\![x,y,z]\!]/(x^2,xy,y^2)$ with $k$ a field.
Then $R$ is a complete Cohen--Macaulay non-Gorenstein local ring of dimension one, and in particular, it possesses a canonical module $\omega$.
Let $\m=(x,y,z)$ be the maximal ideal of $R$, and let $\p=(x,y)$ be the minimal prime ideal of $R$.
Put $\X=\thick_{\db(R)}\omega$ and $\Y=\thick_{\db(R)}\m$.
As the $R$-modules $\omega$ and $\m$ have infinite projective dimension, it is observed that neither $\X$ nor $\Y$ is contained in $\dpf(R)$.
The localization $\omega_\p$ is the injective hull of the residue field of the artinian local ring $R_\p$, while $\m_\p=R_\p$.
In particular, neither $\omega_\p$ nor $R_\p$ is zero, and hence $\p$ belongs to $\supp\X\cap\supp\Y$.

Suppose that $\p$ belongs to $\supp(\X\cap\Y)$.
Then there exists an $R$-complex $Z\in\X\cap\Y$ such that $Z_\p\ncong0$.
As the $R_\p$-module $\omega_\p$ is injective, every object in $\thick_{\db(R_\p)}\omega_\p$ has finite injective dimension over $R_\p$.
Also, every object in $\thick_{\db(R_\p)}R_\p$ has finite projective dimension over $R_\p$.
It is observed that the $R_\p$-complex $Z_\p$ has finite projective dimension and finite injective dimension, which forces the local ring $R_\p$ to be Gorenstein by \cite[Theorem 5.3]{DGI}.
However, it is easy to see that $R_\p\cong k[\![x,y,z]\!]_{(x,y)}/(x^2,xy,y^2)$ is not Gorenstein.
This contradiction shows that $\p$ is not in $\supp(\X\cap\Y)$, and we conclude that $\supp(\X\cap\Y)\ne\supp\X\cap\supp\Y$.
\end{ex}

Now we apply the Hopkins--Neeman theorem to get an answer to Question \ref{13}.

\begin{prop}\label{18}
The equality \eqref{14} holds if both the subcategories $\X,\Y$ of $\db(R)$ are contained in $\dpf(R)$.
\end{prop}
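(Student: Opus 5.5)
The plan is to use the Hopkins--Neeman theorem (Theorem \ref{17}) to express $\X$ and $\Y$ through their supports. The inclusion $\supp(\X\cap\Y)\subseteq\supp\X\cap\supp\Y$ is trivial, since every object of $\X\cap\Y$ lies in both $\X$ and $\Y$. So everything hinges on the reverse inclusion.

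Set $V=\supp\X$ and $W=\supp\Y$. Because $\X$ and $\Y$ are thick subcategories of $\dpf(R)$, the sets $V$ and $W$ are specialization-closed. This holds because each is a union of supports of bounded complexes of finitely generated modules, and such supports are closed. Theorem \ref{17} then gives
$$
\X=\{X\in\dpf(R)\mid\supp X\subseteq V\},\qquad \Y=\{Y\in\dpf(R)\mid\supp Y\subseteq W\}.
$$
It follows that
$$
\X\cap\Y=\{Z\in\dpf(R)\mid\supp Z\subseteq V\cap W\}.
$$

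Now take $\p\in V\cap W$. It remains to produce a perfect complex $Z$ with $\p\in\supp Z\subseteq V\cap W$. The natural choice is the Koszul complex $Z=K(\xx)$ on a finite generating set $\xx$ of $\p$. This is a perfect complex with $\supp Z=\V(\p)$. Since $V$ and $W$ are specialization-closed and contain $\p$, we get $\V(\p)\subseteq V\cap W$. Hence $Z\in\X\cap\Y$ and $\p\in\supp Z\subseteq\supp(\X\cap\Y)$.

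The only point requiring care is the claim $\supp K(\xx)=\V(\p)$. Localizing at a prime $\mathfrak q$: if $\p\not\subseteq\mathfrak q$, some element of $\xx$ is a unit in $R_{\mathfrak q}$, so $K(\xx)_{\mathfrak q}$ is acyclic. If $\p\subseteq\mathfrak q$, then $H_0(K(\xx)_{\mathfrak q})=(R/\p)_{\mathfrak q}\neq0$. Everything else follows directly from Theorem \ref{17}.
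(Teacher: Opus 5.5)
Your proof is correct and follows essentially the same route as the paper. Both take the Koszul complex $K$ on generators of $\p$, observe $\supp K=\V(\p)\subseteq\supp\X\cap\supp\Y$, and use Hopkins--Neeman (Theorem \ref{17}) to conclude $K\in\X\cap\Y$. The differences are minor: you justify specialization-closedness of the supports and verify $\supp K=\V(\p)$ by hand, where the paper cites a reference.
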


\begin{proof}
Pick any element $\p\in\supp\X\cap\supp\Y$.
Let $K$ be the Koszul complex of a system of generators of the prime ideal $\p$.
Using \cite[Proposition 2.3(3)]{dm}, we see that $\supp K=\V(\p)\subseteq\supp\X\cap\supp\Y$.
As $K$ belongs to $\dpf(R)$ and $\X,\Y$ are contained in $\dpf(R)$, Theorem \ref{17} implies that $K$ belongs to $\X\cap\Y$.
It follows that $\p\in\V(\p)=\supp K\subseteq\supp(\X\cap\Y)$.
We conclude the equality $\supp(\X\cap\Y)=\supp\X\cap\supp\Y$ holds.
\end{proof}

Using the above proposition, we obtain a sufficient condition for the equality \eqref{14} to hold, which leads us to a characterization of the complete intersection local rings in terms of the equality \eqref{14}.

\begin{thm}\label{25}
\begin{enumerate}[\rm(1)]
\item
The equality \eqref{14} holds whenever the thick subcategories $\X,\Y$ are proxy small.
\item
Suppose that $R$ is a local ring.
Then the following three conditions are equivalent.
\begin{enumerate}[\rm(a)]
\item
The local ring $R$ is a complete intersection.
\item
The equality \eqref{14} holds for all thick subcategories $\X,\Y$ of $\db(R)$.
\item
The equality \eqref{14} holds for all thick subcategories $\X$ of $\db(R)$ and $\Y=\dpf(R)$.
\end{enumerate}
\end{enumerate}
\end{thm}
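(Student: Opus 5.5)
For (1), I reduce to Proposition \ref{18}. Set $\X'=\X\cap\dpf(R)$ and $\Y'=\Y\cap\dpf(R)$. Both are thick subcategories of $\db(R)$ contained in $\dpf(R)$. Proposition \ref{18} therefore gives $\supp(\X'\cap\Y')=\supp\X'\cap\supp\Y'$. Proxy smallness of $\X$ and $\Y$ says exactly that $\supp\X'=\supp\X$ and $\supp\Y'=\supp\Y$. Since $\X'\cap\Y'\subseteq\X\cap\Y$, we obtain
$$
\supp\X\cap\supp\Y=\supp(\X'\cap\Y')\subseteq\supp(\X\cap\Y)\subseteq\supp\X\cap\supp\Y,
$$
where the last inclusion is trivial because $\X\cap\Y$ lies in both $\X$ and $\Y$. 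This forces equality throughout.

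For (2), I argue in a cycle.

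\emph{(a)$\Rightarrow$(b).} If $R$ is a complete intersection, then by Theorem \ref{23} every thick subcategory of $\db(R)$ is proxy small. Part (1) then applies to any pair $\X,\Y$.

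\emph{(b)$\Rightarrow$(c).} This is immediate, since $\dpf(R)=\thick_{\db(R)}R$ is itself a thick subcategory of $\db(R)$.

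\emph{(c)$\Rightarrow$(a).} By Remark \ref{15}, condition (c) says that every thick subcategory $\X$ of $\db(R)$ is proxy small. Theorem \ref{23} then shows that $R$ is a complete intersection. Equivalently, this is the content of Proposition \ref{24}.

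There is no real obstacle here, since all the substantive input sits in Theorem \ref{23} and Proposition \ref{18}. The only point needing care is in (1): we must check that $\X\cap\dpf(R)$ is a thick subcategory contained in $\dpf(R)$, so that Proposition \ref{18} legitimately applies. This holds because an intersection of thick subcategories is thick.
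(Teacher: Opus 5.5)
Your proposal is correct and follows essentially the same route as the paper. Part (1) applies Proposition \ref{18} to $\X\cap\dpf(R)$ and $\Y\cap\dpf(R)$ and then uses proxy smallness. Part (2) runs the same cycle, using Theorem \ref{23} with (1) for (a)$\Rightarrow$(b) and Remark \ref{15}/Proposition \ref{24} for (c)$\Rightarrow$(a).
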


\begin{proof}
(1) As $\X\cap\dpf(R)$ and $\Y\cap\dpf(R)$ are thick subcategories contained in $\dpf(R)$, Proposition \ref{18} yields
$$
\supp(\X\cap\dpf(R))\cap\supp(\Y\cap\dpf(R))
=\supp((\X\cap\dpf(R))\cap(\Y\cap\dpf(R)))
=\supp(\X\cap\Y\cap\dpf(R)).
$$
Since both $\X$ and $\Y$ are proxy small subcategories of the derived category $\db(R)$, we get equalities
$$
\supp\X\cap\supp\Y
=\supp(\X\cap\dpf(R))\cap\supp(\Y\cap\dpf(R))
=\supp(\X\cap\Y\cap\dpf(R)).
$$
Note that $\supp(\X\cap\Y\cap\dpf(R))$ is contained in $\supp(\X\cap\Y)$, which is contained in $\supp\X\cap\supp\Y$.
We now conclude that the equality $\supp(\X\cap\Y)=\supp\X\cap\supp\Y$ holds.

(2) By (1) and Theorem \ref{23}  we have (a)$\Rightarrow$(b), the implication (b)$\Rightarrow$(c) is straightforward, and Proposition \ref{24} yields (c)$\Rightarrow$(a).
Hence the three conditions (a), (b) and (c) are equivalent.
\end{proof}

It is natural to consider the singularity category version of Question \ref{13}, i.e., the question asking when
\begin{equation}\label{26}
\ssupp(\X\cap\Y)=\ssupp\X\cap\ssupp\Y
\end{equation}
holds for thick subcategories $\X,\Y$ of $\ds(R)$.
The question admits a clean answer, when $R$ is locally dominant.

\begin{prop}\label{27}
Suppose that the ring $R$ is locally dominant (e.g., $R$ is locally a hypersurface).
Then the equality \eqref{26} holds true for all thick subcategories $\X,\Y$ of the singularity category $\ds(R)$.
\end{prop}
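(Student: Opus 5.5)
The approach is to mirror the proof of Proposition \ref{18}. Theorem \ref{5}(1) will play the role that the Hopkins--Neeman theorem played there, transported to $\ds(R)$ through the canonical functor $\pi:\db(R)\to\ds(R)$. The inclusion $\ssupp(\X\cap\Y)\subseteq\ssupp\X\cap\ssupp\Y$ is trivial, so only the reverse inclusion needs proof.

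First, pull back along $\pi$. For a thick subcategory $\X$ of $\ds(R)$, set $\widetilde\X=\pi^{-1}\X$. This is a thick subcategory of $\db(R)$ containing $\dpf(R)$, in particular containing $R$. Since $\pi$ is essentially surjective (objects of $\ds(R)$ are objects of $\db(R)$), we have $\pi\widetilde\X=\X$. Hence
\[
\ipd(\widetilde\X)=\ssupp(\pi\widetilde\X)=\ssupp\X ,
\]
and similarly for $\Y$. Moreover $\pi^{-1}(\X\cap\Y)=\widetilde\X\cap\widetilde\Y$, so $\ssupp(\X\cap\Y)=\ipd(\widetilde\X\cap\widetilde\Y)$.

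The problem is thereby reduced to proving
\[
\ipd(\widetilde\X\cap\widetilde\Y)=\ipd(\widetilde\X)\cap\ipd(\widetilde\Y)
\]
for thick subcategories of $\db(R)$ containing $R$. By Theorem \ref{5}(1), $\widetilde\X=\{M\in\db(R)\mid\ipd(M)\subseteq W\}$ with $W=\ipd(\widetilde\X)\in\spcl(\sing R)$, and likewise $\widetilde\Y$ corresponds to some $W'=\ipd(\widetilde\Y)$. Therefore
\[
\widetilde\X\cap\widetilde\Y=\{M\mid\ipd(M)\subseteq W\cap W'\}.
\]
This is again a thick subcategory containing $R$. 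The set $W\cap W'$ is specialization-closed in $\sing R$, so by the bijection of Theorem \ref{5}(1) we get $\ipd(\widetilde\X\cap\widetilde\Y)=W\cap W'$, which is exactly what is needed.

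If one prefers to avoid quoting the bijection in that last step, one can argue directly as in Proposition \ref{18}. Given $\p\in W\cap W'$, one needs an object $M$ with $\p\in\ipd(M)\subseteq\V(\p)\cap\sing R$. The candidate is $M=R/\p$: it satisfies $\ipd(R/\p)\subseteq\V(\p)$, and $(R/\p)_\p=\kappa(\p)$ has infinite projective dimension because $R_\p$ is singular. Since $W\cap W'$ is specialization-closed, $\ipd(R/\p)\subseteq W\cap W'$, so $R/\p$ lies in $\widetilde\X\cap\widetilde\Y$ and witnesses $\p$.

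The main point requiring care is the identification $\ssupp(\pi\ZZ)=\ipd(\ZZ)$, together with the fact that $\pi\pi^{-1}\X=\X$. The latter uses that $\ds(R)$ has the same objects as $\db(R)$ and that $\X$ is closed under isomorphism. Both are recorded in the preliminaries, so no serious obstacle is expected. Local dominance enters only through Theorem \ref{5}(1).
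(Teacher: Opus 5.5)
Your proposal is correct and follows essentially the paper's argument. The paper also reduces to thick subcategories of $\db(R)$ containing $R$ and invokes Theorem \ref{5}(1) with $R/\p$ as the witness for $\p\in\ipd(\X\cap\Y)$, exactly as in your alternative paragraph; your main line instead reads off $\ipd$ of the intersection from the bijection applied to $W\cap W'$, which is only a repackaging, and you make explicit the pullback along $\pi$ that the paper leaves implicit.
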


\begin{proof}
For all thick subcategories $\X,\Y$ of $\db(R)$ containing $R$, we shall prove the equality
$$
\ipd(\X\cap\Y)=\ipd(\X)\cap\ipd(\Y).
$$
Clearly, the right-hand side contains the left-hand side.
Pick any prime ideal $\p$ belonging to $\ipd(\X)\cap\ipd(\Y)$.
Note that $\p$ is in $\sing R$, and that $\ipd(\X),\ipd(\Y)$ are specialization-closed.
We see that $\ipd(R/\p)=\V(\p)$, and this is contained in both $\ipd(\X)$ and $\ipd(\Y)$.
Since the ring $R$ is locally dominant, $R/\p$ belongs to both $\X$ and $\Y$ by Theorem \ref{5}(1).
It follows that $\p\in\V(\p)=\ipd(R/\p)\subseteq\ipd(\X\cap\Y)$, and we are done.
\end{proof}

Using the above proposition, we get the corollary below, which says that if $R$ is a local complete intersection of codimension at least two, there exist thick subcategories $\X,\Y$ of $\ds(R)$ such that \eqref{26} does not hold.

\begin{cor}\label{28}
Suppose that $R$ is a local ring which is a complete intersection.
Then the equality \eqref{26} holds true for all thick subcategories $\X$ and $\Y$ of $\ds(R)$ if and only if the local ring $R$ is a hypersurface.
\end{cor}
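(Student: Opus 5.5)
The plan is to treat the two implications separately. The forward direction comes from Proposition \ref{27}. The converse needs an explicit counterexample built from support varieties over a complete intersection of codimension at least two.

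\emph{If $R$ is a hypersurface.} Every localization $R_\p$ of a hypersurface is again a hypersurface, so $R$ is locally a hypersurface. Hence $R$ is locally dominant, and Proposition \ref{27} gives the equality \eqref{26} for all thick subcategories $\X,\Y$ of $\ds(R)$.

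\emph{Only if.} Suppose $R$ is a complete intersection of codimension $c\ge2$. We will exhibit thick subcategories $\X,\Y$ of $\ds(R)$ with $\m\in\ssupp\X\cap\ssupp\Y$ but $\X\cap\Y=0$.

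We may pass to the completion $\widehat R$, with a remark that this is harmless. Then $\widehat R=Q/(f_1,\dots,f_c)$ with $Q$ regular and $f_1,\dots,f_c$ a regular sequence, and we use cohomological support varieties $V(M)\subseteq\mathbb{P}^{c-1}_{k}$ over the residue field, extended to $\bar k$ if needed. The standard facts we need are:
\begin{itemize}
\item $V$ of a complex is zero exactly when the complex is perfect;
\item every closed subset of $\mathbb{P}^{c-1}$ is realized as $V(M)$ for some module $M$ (Bergh's realizability, or the theorem of Avramov--Buchweitz);
\item $V$ is additive on triangles, meaning $V$ of a cone lies in the union of the two varieties, and $V$ is stable under summands and shifts.
\end{itemize}

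Since $c\ge2$, the projective space $\mathbb{P}^{c-1}$ contains two distinct points $a\ne b$. Choose modules $M,N$ with $V(M)=\{a\}$ and $V(N)=\{b\}$. Set $\X=\thick_{\ds(R)}M$ and $\Y=\thick_{\ds(R)}N$.
\begin{itemize}
\item By the closure properties above, every object of $\X$ has variety contained in $\{a\}$, and every object of $\Y$ has variety contained in $\{b\}$.
\item An object of $\X\cap\Y$ therefore has empty variety. It is perfect, hence zero in $\ds(R)$, so $\ssupp(\X\cap\Y)=\emptyset$.
\item Since $M$ and $N$ are nonperfect, $\m\in\ipd(M)\cap\ipd(N)\subseteq\ssupp\X\cap\ssupp\Y$.
\end{itemize}
Thus \eqref{26} fails.

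The main obstacle is the reduction to the complete case. One option is to work directly over $R$ through the Eisenbud operators and the Koszul presentation, so that varieties are defined using $R\to\widehat R$. The alternative is to check that $\m\in\ssupp$ and the emptiness of the intersection can both be detected after completion: complete first, construct $M,N$ over $\widehat R$, then use that every $\widehat R$-module is a summand in $\ds$ of a completion up to syzygies, or simply state the result for complete $R$ if descent is unavailable. A cleaner route, if it is available, avoids completion altogether. Over any complete intersection, the thick subcategories of $\ds(R)$ are classified by specialization-closed subsets of the singular locus of the associated hypersurface scheme, by Stevenson's classification. Two distinct closed points of the fiber over $\m$ then give thick subcategories whose intersection is $0$ while both have singular support containing $\m$.
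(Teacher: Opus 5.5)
Your ``if'' direction is the same as the paper's. For ``only if'' the paper argues more briefly. Since the singular support of a nonzero thick subcategory of $\ds(R)$ is nonempty and specialization-closed, it contains $\m$. So \eqref{26} forces any two nonzero thick subcategories of $\ds(R)$ to have nonzero intersection, and the paper then cites \cite[Theorem 3.7]{core}: a complete intersection with this property is a hypersurface. Your support-variety construction of two nonzero thick subcategories with zero intersection amounts to a proof of that cited result. Over $\widehat{R}$ it is sound, provided you take $a,b$ to be $k$-rational points (for instance two coordinate points), since realizability only produces closed sets defined over $k$.

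The gap is the passage from $\widehat{R}$ to $R$, which you flag but do not close. The statement is about $\ds(R)$, so $M$ and $N$ must be objects over $R$. None of your three options closes it:
\begin{itemize}
\item Restricting to complete $R$ proves less than what is claimed.
\item Stevenson's classification is formulated for rings of the form $Q/(f_1,\dots,f_c)$ with $Q$ regular, which a non-complete local complete intersection need not be.
\item The assertion that every object of $\ds(\widehat{R})$ is a summand of a completion is unjustified beyond the isolated-singularity case.
\end{itemize}

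Here is a fix.
\begin{itemize}
\item Define $V_R$. For $T\in\db(R)$ put $V_R(T)=V_{\widehat{R}}(T\otimes_R\widehat{R})$. Completion is exact, and $T$ is perfect if and only if $T\otimes_R\widehat{R}$ is. Hence $V_R$ respects triangles, shifts and summands, and vanishes exactly on perfect complexes. It is therefore well defined on objects of $\ds(R)$.
\item Build $M$. Let $L$ be an $\widehat{R}$-module with $V(L)=\{a\}$, and let $K$ be the Koszul complex over $\widehat{R}$ on a generating set of $\m$. The complex $K\lten_{\widehat{R}}L$ has three properties:
\begin{itemize}
\item it lies in $\thick_{\db(\widehat{R})}L$, so its variety lies in $\{a\}$;
\item it is not perfect, because $(K\lten L)\lten k\cong(K\otimes k)\otimes_k(L\lten k)$ is a finite direct sum of shifts of the unbounded complex $L\lten k$;
\item its cohomology has finite length.
\end{itemize}
\item Descend. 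By the equivalence $\db_{\{\m\}}(R)\simeq\db_{\{\widehat{\m}\}}(\widehat{R})$ of complexes with finite-length cohomology, $K\lten_{\widehat{R}}L$ is isomorphic to $M\otimes_R\widehat{R}$ for some $M\in\db(R)$. Then $\emptyset\ne V_R(M)\subseteq\{a\}$.
\end{itemize}
Build $N$ from $b$ in the same way. Your argument then goes through verbatim in $\ds(R)$ with $V_R$ in place of $V$.
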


\begin{proof}
By Proposition \ref{27}, the ``if'' part holds.
We now prove the ``only if'' part.
This is trivial in the case where $R$ is regular, so we may assume that $R$ is singular.
Take two nonzero thick subcategories $\X,\Y$ of $\ds(R)$.
Then the maximal ideal $\m$ of $R$ belongs to the singular supports $\ssupp\X$ and $\ssupp\Y$.
By assumption, the equality $\ssupp(\X\cap\Y)=\ssupp\X\cap\ssupp\Y$ holds.
Hence $\m$ belongs to $\ssupp(\X\cap\Y)$, and therefore, the intersection $\X\cap\Y$ is nonzero.
It is observed from \cite[Theorem 3.7]{core} that $R$ is a hypersurface.
\end{proof}

We close the section by posing the question below, which asks whether the converse of Proposition \ref{27} is true.
Note that Corollary \ref{28} supports this question in the affirmative, since a local complete intersection is dominant if and only if it is a hypersurface; see \cite[Theorem 1.1(4)]{dlr}.

\begin{ques}
Suppose \eqref{26} holds for all thick subcategories $\X,\Y$ of $\ds(R)$.
Is then $R$ locally dominant?
\end{ques}

\section{Characterizing thick subcategories satisfying the support equality}\label{8}

The purpose of this short section is to consider characterizing those thick subcategories $\X,\Y$ of $\db(R)$ which satisfy the equality \eqref{14}, and then restrict it to $\Y=\dpf(R)$.
We begin with stating a lemma.

\begin{lem}\label{29}
Let $R$ be local.
Let $X,Y\in\db(R)$ with $\rhom_R(X,Y)=0$.
Then one has $X=0$ or $Y=0$.
\end{lem}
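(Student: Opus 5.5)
We argue by contraposition: assuming $X\ne0$ and $Y\ne0$ in $\db(R)$, we show that $\rhom_R(X,Y)\ne0$. The plan is to look at extreme cohomological degrees and produce a nonzero morphism $X\to Y[n]$ for a suitable integer $n$, i.e.\ a nonzero class in $\h^n\rhom_R(X,Y)$.

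First, let $s=\sup\{i\mid \h^i(X)\ne0\}$ and $t=\inf\{i\mid \h^i(Y)\ne0\}$. Both exist because $X,Y$ are nonzero bounded complexes. Replacing $X$ by a suitable truncation, we obtain a canonical morphism $X\to \h^s(X)[-s]$, which induces the identity on $\h^s$. Likewise, there is a morphism $\h^t(Y)[-t]\to Y$ inducing the identity on $\h^t$.

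Next, since $M=\h^s(X)$ and $N=\h^t(Y)$ are nonzero finitely generated modules over the local ring $R$, Nakayama's lemma gives surjections $M\to k$ and injections $k\to N$ up to a choice: $\m\in\operatorname{Ass}N$ need not hold. So instead, choose the composite $M\to M/\m M\to k$, which is nonzero. For $N$, use $\Hom_R(M,N)$ directly, where the point is to find a nonzero map.

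The cleaner route is the standard one. Let $P$ be a minimal free resolution (a bounded-above complex of finite free modules) representing $X$, and let $I$ be a minimal injective resolution representing $Y$. Then $\rhom_R(X,Y)=\Hom_R(P,I)$. Use the nondegeneracy of the pairing. The lowest-degree nonzero part of $P$ sits in degree $s$ with $P^s\ne0$ free. Let $d=\operatorname{depth}Y$, which is finite because $Y\ne0$ (the depth of a nonzero bounded complex is finite). Then $E(k)$ appears in $I^{d}$ with positive multiplicity, and by minimality of $I$ one has $\Hom(k,I)$ with zero differential, so $\Ext^{d}(k,Y)\ne0$.

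Then compute the class in $\Hom_R(P,I)$ in total degree $d-s$. A map $P^s\to k\to E(k)\subseteq I^d$, obtained by projecting a basis element of $P^s$ onto $k$ and including into the socle, is a cycle. This uses minimality of $P$ (the differential into $P^s$ lands in $\m P^s$) together with the fact that the socle of $I^d$ is killed by the differential. It is not a boundary, again by minimality of both complexes, since boundaries land in maps whose images are $\m$-torsion-free-small. Checking this last point precisely is the main obstacle.

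A safer alternative for that step: use $\rhom_R(X,Y)\otimes^{\mathbf L}$-adjunction and the formula $\inf$-$\sup$ statement, namely the known equality $\operatorname{depth}\rhom_R(X,Y)=\operatorname{depth}Y+\inf(k\lten X)$ (Foxby--Iyengar). Since $X\ne0$, Nakayama gives $k\lten X\ne0$, so $\inf(k\lten X)=-\sup(k\lten X)$ is finite. Since $\operatorname{depth}Y<\infty$, the depth of $\rhom_R(X,Y)$ is finite, and hence $\rhom_R(X,Y)\ne0$. I would present this depth-formula argument as the proof and cite the reference for the formula.
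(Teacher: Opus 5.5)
Your final argument, the depth formula, is correct. It has the same shape as the paper's proof: both deduce that $\rhom_R(X,Y)\neq0$ from a numerical identity over the local ring in which the zero complex takes a degenerate value. The difference is the identity used.

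\begin{itemize}
\item The paper quotes $\pd_R\rhom_R(X,Y)=\pd_RY+\sup X$ and uses $\pd_R0=-\infty$.
\item You use $\operatorname{depth}_R\rhom_R(X,Y)=\operatorname{width}_RX+\operatorname{depth}_RY$, with $\operatorname{width}_RX=\inf(k\lten_RX)$ in homological grading and $\operatorname{depth}_R0=\infty$.
\end{itemize}

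Your identity holds for all $X,Y\in\db(R)$. It follows directly from $\rhom_R(k,\rhom_R(X,Y))\cong\rhom_R(k\lten_RX,Y)$ and the fact that $k\lten_RX$ is a direct sum of shifted $k$-vector spaces.

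The projective-dimension identity, as quoted in the paper without hypotheses, cannot hold in that generality. Take $R=k[x]/(x^2)$, $X=k$, $Y=R$: then $\rhom_R(k,R)\cong k$ has infinite projective dimension, while $\pd_RR+\sup k=0$. That identity is valid under finiteness hypotheses, for instance when $X$ is perfect. When $\pd_RY$ and $\pd_R\rhom_R(X,Y)$ are both finite, it is equivalent to yours by Auslander--Buchsbaum. So your route is the unconditional version of the same idea.

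When you write it up, fix three things.

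\begin{enumerate}
\item The equation ``$\inf(k\lten X)=-\sup(k\lten X)$'' is false as written. You mean that the homological infimum equals minus the cohomological supremum. All you actually need is that $\inf(k\lten_RX)$ is an integer, which follows from Nakayama and the boundedness of $X$.
\item The claim $\operatorname{depth}_RY<\infty$ for $0\ne Y\in\db(R)$ says exactly that $\rhom_R(k,Y)\ne0$. That is the case $X=k$ of the lemma itself, so you must quote it as a separate standard fact (for instance from $\operatorname{depth}_RY\le\dim_RY$) rather than treat it as obvious.
\item Delete the earlier sketches. In particular, the minimal-resolution claim that the cycle $P^s\to k\to E(k)\subseteq I^d$ is not a boundary is false in general. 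For $X=Y=R$ regular of dimension $d\ge1$, that cycle lies in $\h^d(\Hom_R(R,I))=\h^d(R)=0$. The depth formula detects nonvanishing only after applying $\rhom_R(k,-)$, not in the single group $\h^{d-s}\rhom_R(X,Y)$.
\end{enumerate}
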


\begin{proof}
It follows from \cite[Proposition 16.4.19]{CFH} that $\pd_R\rhom_R(X,Y)=\pd_RY+\sup X$, where $\sup X:=\sup\{i\in\Z\mid\h^i(X)\ne0\}$.
Since $\rhom_R(X,Y)=0$ and $\pd_R0=-\infty$ by the definition of projective dimension, either $\pd_RY$ or $\sup X$ is equal to $-\infty$.
We have $Y=0$ in the former case, and $X=0$ in the latter case.
\end{proof}

Now we are able to give a proof of the following theorem, which is the main result of this section.
Note that this theorem especially says that the equality \eqref{14} holds if $\Y$ is contained in $\dpf(R)$.

\begin{thm}\label{21}
Let $\X,\Y$ be thick subcategories of $\db(R),\dpf(R)$ respectively.
The following are equivalent.
\begin{enumerate}[\rm(1)]
\item
There is an equality $\supp(\X\cap\Y)=\supp\X\cap\supp\Y$.
\item
The extension $\X\ast\Y$ is a triangulated subcategory of $\db(R)$.
\item
Any morphism $f:X\to Y$ in $\db(R)$ with $X\in\X$ and $Y\in\Y$ factors through an object in $\X\cap\Y$.
\end{enumerate}
\end{thm}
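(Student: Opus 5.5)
We prove the cycle (3)$\Rightarrow$(2), (2)$\Rightarrow$(3), (3)$\Rightarrow$(1) and (1)$\Rightarrow$(3). One general fact is used repeatedly: since $\Y\subseteq\dpf(R)=\thick_{\db(R)}R$, the intersection $\X\cap\Y$ is closed under $-\otimes^{\mathbf L}_RP$ for every perfect $P$. By Theorem \ref{17}, $\X\cap\Y$ is therefore exactly the set of perfect complexes whose support lies in $V:=\supp(\X\cap\Y)$.

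\emph{(3)$\Rightarrow$(2).} The subcategory $\X\ast\Y$ contains $0$ and is closed under shifts. It suffices to show $\Y\ast\X\subseteq\X\ast\Y$. Indeed, $\X\ast\X=\X$ and $\Y\ast\Y=\Y$, so this inclusion gives $(\X\ast\Y)\ast(\X\ast\Y)\subseteq\X\ast\X\ast\Y\ast\Y=\X\ast\Y$. Hence $\X\ast\Y$ is extension-closed, and therefore closed under cones. Let $T\in\Y\ast\X$, so there is a triangle $X[-1]\xrightarrow{h}Y\to T\to X$. By (3), $h$ factors as $X[-1]\xrightarrow{a}Z\xrightarrow{b}Y$ with $Z\in\X\cap\Y$. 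The octahedral axiom applied to $h=ba$ gives a triangle $\operatorname{cone}(a)\to T\to\operatorname{cone}(b)\rightsquigarrow$. Here $\operatorname{cone}(a)\in\X$ and $\operatorname{cone}(b)\in\Y$, so $T\in\X\ast\Y$.

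\emph{(2)$\Rightarrow$(3).} Let $f:X\to Y$ and $C=\operatorname{cone}(f)$, so $C\in\Y\ast\X[1]\subseteq\X\ast\Y$. There is then a triangle $X'\to C\xrightarrow{q}Y'$ with $X'\in\X$ and $Y'\in\Y$. We compare it with $Y\xrightarrow{u}C\to X[1]$ using the octahedral axiom on $qu$. The goal is an object $Z$ lying in both $\X$ and $\Y$ through which $f$ factors. The natural candidate is a shifted cocone of $Y\to Y'$ or of $X'\to X[1]$, built as an extension of objects of $\X$ and $\Y$. This is the standard Iyama--Yoshino type argument, and I expect it to be routine bookkeeping.

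\emph{(3)$\Rightarrow$(1).} Take $\p\in\supp\X\cap\supp\Y$, and choose $X\in\X$ and $Y\in\Y$ with $X_\p\ne0\ne Y_\p$. By Lemma \ref{29} over $R_\p$, we have $\rhom_R(X,Y)_\p\ne0$. So $\Hom_{\db(R)}(X,Y[n])_\p=\Hom_{\db(R_\p)}(X_\p,Y_\p[n])\ne0$ for some $n$, and we pick $f:X\to Y[n]$ with $f_\p\ne0$. By (3), $f$ factors through some $Z\in\X\cap\Y$, and $Z_\p\ne0$ because $f_\p\neq 0$. Hence $\p\in\supp(\X\cap\Y)$.

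\emph{(1)$\Rightarrow$(3).} This is the step I expect to be the main obstacle. Let $f:X\to Y$. Using the coevaluation for the perfect complex $Y$, $f$ factors as
$$X\to X\otimes^{\mathbf L}_RY^\vee\otimes^{\mathbf L}_RY\to Y,$$
where $Y^\vee=\rhom_R(Y,R)$. The middle object $M$ lies in $\X$ and satisfies $\supp M\subseteq\supp\X\cap\supp\Y=V$ by (1). It then remains to show that any morphism $g:M\to Y$, with $M\in\db(R)$ supported in $V$ and $Y$ perfect, factors through a perfect complex supported in $V$, i.e.\ through an object of $\X\cap\Y$. My first attempt is to dualize $g$ to $Y^\vee\to\rhom_R(M,R)$. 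Then I would use that the source is compact and the target is $V$-torsion, so the map should factor through $\Gamma_V$ of the target. Writing $\Gamma_VR$ as a homotopy colimit of perfect complexes supported in $V$ (Koszul complexes on ideals defining closed subsets of $V$), compactness of $Y^\vee$ lets the map factor through a finite stage. Dualizing back, and tensoring the finite stage with $Y$ if needed, should give the required $Z\in\X\cap\Y$.
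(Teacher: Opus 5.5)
Your (3)$\Rightarrow$(1) is the paper's argument, run directly instead of by contradiction. For (2)$\Leftrightarrow$(3) you reprove what the paper simply cites from \cite[Lemma 1.1]{JK}. Your (3)$\Rightarrow$(2) is complete. In (2)$\Rightarrow$(3) the object to take is the cocone $W$ of $qu\colon Y\to Y'$. It lies in $\Y$ as a cocone of a map in $\Y$. It lies in $\X$ because the octahedron gives a triangle $X\to W\to X'\rightsquigarrow$. Finally, $f$ factors through $W$ (up to sign) by the commutativity part of the octahedral axiom.

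The real divergence is (1)$\Rightarrow$(3).

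The paper stays inside $\db(R)$. With $\xx=x_1,\dots,x_n$ generating $\operatorname{ann}_R\End_{\db(R)}(X)$, one has $\supp X=\V(\xx)$ and $K\lten_RX\cong\bigoplus_i(X[i])^{\oplus\binom ni}$. So $X$ is a summand of $(K\lten_RX)[-n]$, and $f$ factors explicitly through $(K\lten_RY)[-n]$. This is a perfect object of $\Y$ with support $\supp X\cap\supp Y$, hence lies in $\X\cap\Y$ by Theorem \ref{17}.

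Your duality/compactness route also works, but as written its last step does not yet land in a perfect complex. Set $N=\rhom_R(M,R)$, let $K_m$ be the Koszul complex on $m$-th powers of generators of an ideal $I$ with $\V(I)=\supp M$, and let $\epsilon_m\colon K_m^\vee\to R$ be the augmentation. Compactness of $Y^\vee$ only gives $g^\vee=(\epsilon_m\lten_RN)\circ h$ with $h\colon Y^\vee\to K_m^\vee\lten_RN$, and $K_m^\vee\lten_RN$ is not perfect.

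The missing move is an adjunction. Under $K_m^\vee\lten_RN\cong\rhom_R(K_m,N)$, the map $\epsilon_m\lten_RN$ is restriction along $\epsilon_m^\vee\colon R\to K_m$. Hence $g^\vee=h'\circ(\epsilon_m^\vee\lten_RY^\vee)$, where $h'\colon K_m\lten_RY^\vee\to N$ is adjoint to $h$. Dualizing, $g$ factors through the perfect complex $K_m^\vee\lten_RY$, which is supported in $\supp M\cap\supp Y$.

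Two further points need care:
\begin{itemize}
\item You should use the closed set $\supp M$ rather than $V$, so that the homotopy colimit is sequential.
\item You must justify $\mathbf{R}\Gamma_IN\cong N$ in $\mathsf{D}(R)$, since $N$ is possibly unbounded with $I$-torsion cohomology.
\end{itemize}

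Once this is done, your coevaluation step is unnecessary. The same argument applied directly to $f$, with $\V(I)=\supp X$, already yields $K_m^\vee\lten_RY$ with support $\supp X\cap\supp Y$.

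Since $K_m^\vee$ is a shift of $K_m$ and $\epsilon_m$ is, up to shift, the paper's truncation $\beta$, both routes factor $f$ through the same kind of object. The paper's annihilator trick pins down an explicit ideal (exponent one) and avoids local cohomology, homotopy colimits and the unbounded derived category. Your route obtains a non-explicit power through compactness.
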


\begin{proof}
The equivalence (2)$\Leftrightarrow$(3) is due to \cite[Lemma 1.1]{JK}.
The implication (1)$\Rightarrow$(3) is shown in a similar way as in the proof of \cite[Lemma 4.2(1)]{proxy}.
Let $f:X\to Y$ be a morphism in $\db(R)$ such that $X\in\X$ and $Y\in\Y$.
Take a set of generators $\xx=x_1,\dots,x_n$ of the annihilator of the $R$-module $\End_{\db(R)}(X)$.
Then there is an equality $\supp X=\V(\xx)$.
Let $K$ be the Koszul complex of $\xx$ over $R$, and put $Z=K\lten_RY$.
We have $Z\in\Y$ and $\supp K=\V(\xx)=\supp X$.
Therefore, it holds that
$$
\supp Z
=\supp(K\lten_RY)
=\supp K\cap\supp Y
=\supp X\cap\supp Y
\subseteq\supp\X\cap\supp\Y
=\supp(\X\cap\Y),
$$
where we use \cite[Corollary 15.1.17]{CFH} to get the second equality.
As $\Y$ is a subcategory of $\dpf(R)$, we have that $Z$ is in $\dpf(R)$ and $\X\cap \Y$ is a thick subcategory of $\dpf(R)$.
Theorem \ref{17} shows $Z\in\X\cap\Y$.
As $\xx$ annihilates $\End_{\db(R)}(X)$, there is an isomorphism $K\lten_RX\cong\bigoplus_{i=0}^n(X[i])^{\oplus\binom{n}{i}}$ in $\db(R)$.
We get a commutative diagram
$$
\xymatrix@R-1pc@C+5pc{
K\lten_RX\ar[d]^-{K\lten_Rf}\ar[r]^-\phi_-\cong&\bigoplus_{i=0}^n(X[i])^{\oplus\binom{n}{i}}\ar[r]^-\alpha& X[n]\ar[d]^-{f[n]}\\
K\lten_RY\ar[r]^-{\beta\lten_RY}& R[n]\lten_RY\ar[r]^-\psi_-\cong& Y[n]}
$$
in $\db(R)$, where $\alpha$ is the projection in degree $-n$, and $\beta:K\to R[n]$ is the morphism induced by truncation in degree $-n$.
Letting $\gamma:X[n]\to\bigoplus_{i=0}^n(X[i])^{\oplus\binom{n}{i}}$ be a morphism with $\alpha\gamma=1$, we obtain an equality
$$
f=(\psi\circ(\beta\lten_RY)\circ(K\lten_Rf)\circ\phi^{-1}\circ\gamma)[-n].
$$
Therefore, the morphism $f$ factors through the object $(K\lten_RY)[-n]=Z[-n]\in\X\cap\Y$.

Now, let us prove the remaining implication (3)$\Rightarrow$(1).
Assume $\supp\X\cap\supp\Y\ne\supp(\X\cap\Y)$.
Then $\supp\X\cap\supp\Y$ strictly contains $\supp(\X\cap\Y)$, and we find a prime ideal $\p$ of $R$ and objects $X,Y$ of $\db(R)$ such that $\p\in\supp X\cap\supp Y$ and $\p\notin\supp(\X\cap\Y)$.
Fix an integer $i$, and let $f:X\to Y[i]$ be a morphism in $\db(R)$.
By assumption, $f$ factors through an object $E\in\X\cap\Y$.
Localization at the prime ideal $\p$ shows that the morphism $f_\p:X_\p\to Y_\p[i]$ in $\db(R_\p)$ factors through the object $E_\p\in(\X\cap\Y)_\p$.
Since $\p$ is not in $\supp(\X\cap\Y)$, we get $E_\p=0$ and hence $f_\p=0$.
This shows $\Hom_{\db(R_\p)}(X_\p,Y_\p[i])=0$ for all $i\in\Z$.
Therefore, 
$$
\h^i(\rhom_{R_\p}(X_\p,Y_\p))=\Ext_{R_\p}^i(X_\p,Y_\p)=\Hom_{\db(R_\p)}(X_\p,Y_\p[i])=0
$$
for all integers $i$, which implies that $\rhom_{R_\p}(X_\p,Y_\p)=0$.
In view of Lemma \ref{29}, either $X_\p$ or $Y_\p$ is zero.
This contradicts the fact that $\p\in\supp X\cap\supp Y$.
We conclude that $\supp(\X\cap\Y)=\supp\X\cap\supp\Y$.
\end{proof}

Applying Theorem \ref{21} to $\Y=\dpf(R)$, we get a characterization of the proxy small subcategories.

\begin{cor}\label{2}
Let $\X$ be a thick subcategory of $\db(R)$.
The following statements are equivalent.
\begin{enumerate}[\rm(1)]
\item
The thick subcategory $\X$ of $\db(R)$ is proxy small.
\item
The extension $\X\ast\dpf(R)$ is a triangulated subcategory of $\db(R)$.
\item
Any morphism $f:X\to P$ in $\db(R)$ with $X\in\X$ and $P\in\dpf(R)$ factors through an object in $\X\cap\dpf(R)$.
\end{enumerate}
\end{cor}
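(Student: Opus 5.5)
This corollary is Theorem \ref{21} applied to the thick subcategory $\Y=\dpf(R)$ of $\dpf(R)$. Since $\dpf(R)$ is a thick subcategory of itself, the theorem gives that the following three conditions are equivalent:
\begin{itemize}
\item[(a)] $\supp(\X\cap\dpf(R))=\supp\X\cap\supp\dpf(R)$;
\item[(b)] $\X\ast\dpf(R)$ is a triangulated subcategory of $\db(R)$;
\item[(c)] every morphism $f:X\to P$ with $X\in\X$ and $P\in\dpf(R)$ factors through an object of $\X\cap\dpf(R)$.
\end{itemize}
Conditions (b) and (c) are literally statements (2) and (3) of the corollary. It therefore remains to identify (a) with statement (1).

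For this we use Remark \ref{15}. We have $\supp\dpf(R)=\spec R$, because $R\in\dpf(R)$ and $R_\p\ne0$ for every prime $\p$. Hence the right-hand side of (a) is $\supp\X\cap\spec R=\supp\X$. So (a) reads $\supp(\X\cap\dpf(R))=\supp\X$, which is exactly the definition of $\X$ being proxy small.

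Nothing here is a genuine obstacle. The only points to check are that $\dpf(R)$ qualifies as a thick subcategory of $\dpf(R)$ in the hypothesis of Theorem \ref{21}, and that $\supp\dpf(R)=\spec R$; both are immediate.
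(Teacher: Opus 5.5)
Your proposal is correct and is exactly the paper's argument. The corollary is Theorem \ref{21} specialized to $\Y=\dpf(R)$, and, as in Remark \ref{15}, the equality $\supp\dpf(R)=\spec R$ identifies the support equality with the definition of proxy smallness.
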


\begin{rem}
The implication (1)$\Rightarrow$(3) in Corollary \ref{2} is none other than \cite[Lemma 4.2(1)]{proxy}.
\end{rem}

\section{Applications of Corollary \ref{2}}\label{19}

In this section, we shall provide applications of Corollary \ref{2}.
For this purpose, we introduce a notation.

\begin{dfn}\label{4}
Let $\E$ be a thick subcategory of $\dpf(R)$.
Let $\pi:\db(R)\to\db(R)/\E$ be the canonical functor.
We define the subcategory $\widetilde\E$ as follows; note that $\widetilde\E$ contains $\E$.
$$
\begin{array}{l}
\widetilde\E
=\{T\in\db(R)\mid\Hom_{\db(R)/\E}(\pi T,\pi \dpf(R))=0\}\\
\phantom{\widetilde\E}=\{T\in\db(R)\mid\Hom_{\db(R)/\E}(\pi T,\pi P)=0\text{ for all }P\in\dpf(R)\}.
\end{array}
$$
\end{dfn}

\begin{rem}\label{orth}
Let $\T$ be a triangulated category, and $\X$ a subcategory of $\T$.
The {\em left orthogonal} ${}^\perp\X=\{T\in\T\mid\Hom_\T(T,\X)=0\}$ of $\X$ in $\T$ is a subcategory of $\T$.
Note that ${}^{\perp}\X$ is thick when $\X$ is closed under shifts.
With the notation of Definition \ref{4}, one has the following equality, so that $\widetilde\E$ is a thick subcategory of $\db (R)$.
$$
\widetilde\E=\pi^{-1}({}^\perp(\pi\dpf(R))).
$$
\end{rem}

One can characterize the intermediate thick subcategories between a given thick subcategory $\E$ of $\dpf(R)$ and $\widetilde\E$.
It turns out that those thick subcategories are necessarily proxy small.

\begin{thm}\label{3}
Let $\E$ be a thick subcategory of $\dpf(R)$.
Then the following statements hold.
\begin{enumerate}[\rm(1)]
\item
$\widetilde\E$ is a proxy small subcategory of $\db (R)$ with $\widetilde\E\cap \dpf (R) =\E$ and $\supp\widetilde\E=\supp\E$. 
\item
The following are equivalent for each thick subcategory $\X$ of $\db(R)$.
\begin{enumerate}[\rm(a)]
\item
There are inclusions $\E\subseteq\X\subseteq\widetilde\E$.
\item
$\X$ contains $\E$ and satisfies the equality $\supp\X=\supp\E$.
\item
$\X$ is proxy small and satisfies the equality $\X\cap\dpf(R)=\E$.
\end{enumerate}
\end{enumerate}
\end{thm}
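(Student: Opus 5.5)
The plan is to first identify $\widetilde\E$ concretely. Put $W=\supp\E$, which is specialization-closed, and by Theorem \ref{17} satisfies $\E=\{P\in\dpf(R)\mid\supp P\subseteq W\}$. I will show
$$
\widetilde\E=\{T\in\db(R)\mid\supp T\subseteq W\}.
$$
Everything in the theorem then follows by bookkeeping with supports.

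For the inclusion $\supseteq$, let $\supp T\subseteq W$ and $P\in\dpf(R)$. A morphism $\pi T\to\pi P$ in $\db(R)/\E$ is a roof $T\xleftarrow{s}T'\xrightarrow{g}P$ with the cone of $s$ in $\E$. Since $\supp T'\subseteq\supp T\cup\supp(\mathrm{cone}\,s)\subseteq W$, it suffices to show that every morphism $g:T'\to P$ with $\supp T'\subseteq W$ factors through an object of $\E$; then $\pi g=0$.

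For this I reuse the Koszul argument from the proof of (1)$\Rightarrow$(3) in Theorem \ref{21}:
\begin{itemize}
\item Take generators $\xx$ of the annihilator of $\End_{\db(R)}(T')$ and let $K$ be their Koszul complex.
\item Then $g$ factors through $(K\lten_RP)[-n]$.
\item This complex is perfect, with $\supp(K\lten_RP)=\supp T'\cap\supp P\subseteq W$.
\item Hence it lies in $\E$ by Theorem \ref{17}.
\end{itemize}

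For the inclusion $\subseteq$, suppose $T\in\widetilde\E$ and there is some $\p\in\supp T\setminus W$. Since $W$ is specialization-closed and $\p\notin W$, no prime of $W$ is contained in $\p$, so $E_\p=0$ for all $E\in\E$. Hence localization $\db(R)\to\db(R_\p)$ kills $\E$ and factors through $\pi$. By Lemma \ref{29} applied over $R_\p$, we get $\rhom_{R_\p}(T_\p,R_\p)\neq0$, so $\Hom_{\db(R)}(T,R[i])_\p\cong\Hom_{\db(R_\p)}(T_\p,R_\p[i])\ne0$ for some $i$. Choose $f:T\to R[i]$ with $f_\p\ne0$. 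Then $\pi f\neq0$, because its image under the factored localization functor is $f_\p$. This contradicts $T\in\widetilde\E$.

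I expect the main point needing care to be this step: checking that localization genuinely factors through the Verdier quotient. This uses specialization-closedness of $W$ in an essential way.

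With this description, part (1) is immediate:
\begin{itemize}
\item $\widetilde\E\cap\dpf(R)=\{P\in\dpf(R)\mid\supp P\subseteq W\}=\E$ by Theorem \ref{17}.
\item $\supp\widetilde\E=W$, since $\E\subseteq\widetilde\E$ gives $\supseteq$.
\item Proxy smallness holds because $\supp(\widetilde\E\cap\dpf(R))=\supp\E=W=\supp\widetilde\E$.
\end{itemize}

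For part (2), each implication is a support comparison.
\begin{itemize}
\item (a)$\Rightarrow$(b): We have $W=\supp\E\subseteq\supp\X\subseteq\supp\widetilde\E=W$.
\item (b)$\Rightarrow$(a): Every $X\in\X$ has $\supp X\subseteq W$, so $X\in\widetilde\E$ by the description above.
\item (a)$\Rightarrow$(c): We get $\E\subseteq\X\cap\dpf(R)\subseteq\widetilde\E\cap\dpf(R)=\E$. Then $\supp(\X\cap\dpf(R))=W=\supp\X$ using (b).
\item (c)$\Rightarrow$(b): $\X$ contains $\E=\X\cap\dpf(R)$, and proxy smallness gives $\supp\X=\supp(\X\cap\dpf(R))=\supp\E$.
\end{itemize}
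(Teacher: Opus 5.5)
Your proof is correct, but it takes a different route from the paper.

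The paper never computes $\widetilde\E$. It gets $\widetilde\E\cap\dpf(R)=\E$ formally: an object $P$ of the intersection has $\Hom_{\db(R)/\E}(\pi P,\pi P)=0$, so $\pi P=0$ and $P\in\E$. It then invokes \cite[Theorem A]{JK} together with Corollary \ref{2}. For a thick $\X$ with $\X\cap\dpf(R)=\E$, these identify the vanishing $\Hom_{\db(R)/\E}(\pi\X,\pi\dpf(R))=0$ with triangulatedness of $\X\ast\dpf(R)$, and hence with proxy smallness. The same two citations also give (c)$\Rightarrow$(a).

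You instead prove the explicit description $\widetilde\E=\{T\in\db(R)\mid\supp T\subseteq\supp\E\}$, i.e.\ $\widetilde\E=\dbe(R)$ with $E=\supp\E$. Your two inclusions are exactly the two halves of the proof of Theorem \ref{21}, inlined at the level of the Verdier quotient: the Koszul factorization gives $\supseteq$, and localization plus Lemma \ref{29} gives $\subseteq$. So you bypass \cite{JK} and Corollary \ref{2} altogether.

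What your route buys is self-containment and extra information. It identifies $\widetilde\E$ as the largest thick subcategory of $\db(R)$ supported in $\supp\E$. That makes part (2) pure support bookkeeping and explains the shape of the final corollary ($\dpfe(R)\subseteq\X\subseteq\dbe(R)$). What the paper's route buys is brevity given the cited machinery, and it serves as the intended illustration of Corollary \ref{2}.

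One small correction to your commentary: the factorization of localization at $\p$ through $\pi$ does not use specialization-closedness of $W$ at all. The vanishing $E_\p=0$ for $E\in\E$ is immediate from $\p\notin W\supseteq\supp E$. Specialization-closedness enters your argument only through the Hopkins--Neeman description of $\E$.
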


\begin{proof}
Let $\pi:\db(R)\to\db(R)/\E$ be the canonical functor.

(1) As we saw in Remark \ref{orth}, the subcategory $\widetilde\E$ of $\db(R)$ is thick and contains $\E$.
Pick any object $P\in\widetilde\E\cap\dpf(R)$.
It holds that $\Hom_{\db(R)/\E}(\pi P,\pi P)=0$, which implies that the object $\pi P$ of $\db(R)/\E$ is zero, so that $P$ belongs to $\E$.
It follows that $\widetilde\E\cap\dpf(R)=\E$.
Combining this with the equality $\Hom_{\db(R)/\E}(\pi\widetilde\E,\pi\dpf(R))=0$ given by the definition of $\widetilde\E$, we observe from \cite[Theorem A]{JK} and Corollary \ref{2} that the thick subcategory $\widetilde\E$ of $\db(R)$ is proxy small.
Hence we have the equalities $\supp\widetilde\E=\supp(\widetilde\E\cap\dpf(R))=\supp\E$.

(2)
(a)$\Rightarrow$(b):
If $\E \subseteq \X \subseteq \widetilde\E$, then $\supp\E\subseteq\supp\X\subseteq\supp\widetilde\E$, so that $\supp\X=\supp\widetilde\E=\supp\E$ by (1).

(b)$\Rightarrow$(c):
Suppose that $\X$ contains $\E$ and satisfies $\supp\X=\supp\E$.
The inclusions $\E\subseteq\X\cap\dpf(R)\subseteq\X$ give rise to inclusions $\supp\E\subseteq\supp(\X\cap\dpf(R))\subseteq\supp\X=\supp\E$.
It follows that $\supp(\X\cap\dpf(R))=\supp\X=\supp\E$.
Thus $\X$ is proxy small, and $\X\cap\dpf(R)=\E$ by Theorem \ref{17}.

(c)$\Rightarrow$(a):
Suppose that $\X$ is proxy small with $\X\cap\dpf(R)=\E$.
Then $\X$ contains $\E$.
Corollary \ref{2} and \cite[Theorem A]{JK} yield that $\Hom _{\db (R)/\E } (\pi\X, \pi\dpf (R))=0$, which says that $\X$ is contained in $\widetilde\E$.
\end{proof}

In the case of a locally dominant ring, the proxy small subcategories $\X$ of $\db(R)$ with $\X\cap\dpf(R)=\E$ bijectively correspond to the thick subcategories $\Y$ of $\db(R)$ with $R\in\Y$ and $\ipd(\Y)\subseteq\supp\E$.
Moreover, we can classify those proxy small subcategories.

\begin{thm}\label{6}
Let $\E$ be a thick subcategory of $\dpf(R)$.
Suppose that the ring $R$ is locally dominant.
Then there are mutually inverse bijections
\begin{equation}\label{30}
\xymatrix{
{\left\{\begin{matrix}
\text{specialization-closed}\\
\text{subsets $W$ of $\sing R$}\\
\text{with $W\subseteq\supp\E$}
\end{matrix}\right\}}
\ar@<.7mm>[r]^-t&
{\left\{\begin{matrix}
\text{proxy small subcategories}\\
\text{$\X$ of $\db(R)$}\\
\text{with $\X\cap\dpf(R)=\E$}
\end{matrix}\right\}}
\ar@<.7mm>[r]^-f\ar@<.7mm>[l]^-s&
{\left\{\begin{matrix}
\text{thick subcategories $\Y$}\\
\text{of $\db(R)$ with $R\in\Y$}\\
\text{and $\ipd(\Y)\subseteq\supp\E$}
\end{matrix}\right\}\!.}
\ar@<.7mm>[l]^-g}
\end{equation}
where the maps $s,t,f,g$ are given by
$$
\begin{array}{l}
s(\X)=\ipd(\X),\qquad
t(W)=\{X\in\db(R)\mid\supp X\subseteq\supp\E,\,\ipd(X)\subseteq W\},\\
f(\X)=\{Y\in\db(R)\mid\ipd(Y)\subseteq\ipd(\X)\},\qquad
g(\Y)=\{X\in\Y\mid\supp X\subseteq\supp\E\}.
\end{array}
$$
Moreover, one has $f(\X)=\thick_{\db(R)}(\X\cup\dpf(R))=\thick_{\db(R)}(\X\cup\{R\})$.
\end{thm}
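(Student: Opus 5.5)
The plan is to derive everything from Theorem \ref{5}, using Theorem \ref{17} to control the perfect part. First I would treat the left pair $s,t$. Let $\X$ be proxy small with $\X\cap\dpf(R)=\E$. Then $\supp\X=\supp(\X\cap\dpf(R))=\supp\E$. Since $\ssupp(\pi\X)=\ipd(\X)$, we get $\sigma(\X)=(\supp\E,\ipd(\X))$, and Theorem \ref{5}(2) guarantees that $\ipd(\X)$ is a specialization-closed subset of $\sing R$ contained in $\supp\E$. Hence $s$ is well defined.

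Conversely, let $W$ be specialization-closed in $\sing R$ with $W\subseteq\supp\E$. Note that $\supp\E$ is specialization-closed by Theorem \ref{17}. Then the pair $(\supp\E,W)$ is admissible in Theorem \ref{5}(2), and $t(W)=\tau(\supp\E,W)$ is proxy small. Its perfect part is computed as follows: a perfect complex $P$ has $\ipd(P)=\emptyset$, so $t(W)\cap\dpf(R)=\{P\in\dpf(R)\mid\supp P\subseteq\supp\E\}$, and this equals $\E$ by Theorem \ref{17}. Thus $t$ is well defined. Since $\sigma$ and $\tau$ are mutually inverse, and every $\X$ in the middle set has first coordinate $\supp\E$, the maps $s$ and $t$ are mutually inverse: $s$ is $\sigma$ followed by projection to the second coordinate, and $t$ is $W\mapsto\tau(\supp\E,W)$.

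Next I would handle $f,g$ through Theorem \ref{5}(1). Let $\Phi(W)=\{Y\in\db(R)\mid\ipd(Y)\subseteq W\}$ denote the inverse of $\ipd$ there. Then $f=\Phi\circ s$, and $f(\X)$ is a thick subcategory containing $R$ with $\ipd(f(\X))=\ipd(\X)\subseteq\supp\E$. In the other direction, any $\Y$ in the right-hand set satisfies $\Y=\Phi(\ipd(\Y))$, so
$$
g(\Y)=\{X\in\db(R)\mid\supp X\subseteq\supp\E,\ \ipd(X)\subseteq\ipd(\Y)\}=t(\ipd(\Y)).
$$
Hence $g=t\circ\ipd$. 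Since $\ipd$ and $\Phi$ are mutually inverse between the right-hand set and the specialization-closed subsets of $\sing R$ inside $\supp\E$, and $s,t$ are mutually inverse, it follows that $f$ and $g$ are mutually inverse.

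For the final equality, note that $f(\X)$ is thick and contains both $\X$ and $\dpf(R)$. This gives
$$
\thick_{\db(R)}(\X\cup\{R\})\subseteq\thick_{\db(R)}(\X\cup\dpf(R))\subseteq f(\X).
$$
To get equality, I would show that $\ipd(\thick_{\db(R)}(\X\cup\{R\}))=\ipd(\X)$. This holds because $\ipd$ of a thick closure is the union of the $\ipd$'s of the generators: localization is exact, and perfectness is preserved by cones, shifts and summands. Moreover $\ipd(R)=\emptyset$. Both subcategories contain $R$ and have the same $\ipd$, so Theorem \ref{5}(1) forces them to coincide.

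The main obstacle I expect is the bookkeeping in the first step. One must verify that the second coordinate of $\sigma$ is exactly $\ipd(\X)$. One must also check that the perfect part of $\tau(\supp\E,W)$ recovers $\E$ itself, and not just a category with the same support; this is exactly where Theorem \ref{17} is used.
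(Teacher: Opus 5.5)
Your proposal is correct and follows essentially the same route as the paper. You read off $s,t$ from $\sigma,\tau$ of Theorem~\ref{5}(2), with Theorem~\ref{17} recovering $\E$ as the perfect part, and you splice with Theorem~\ref{5}(1) to get $f,g$ (your $t\circ\ipd$ is the paper's auxiliary map $h$); the only deviation is in the final equality, where you conclude $\thick_{\db(R)}(\X\cup\{R\})=f(\X)$ directly from injectivity of $\ipd$ on thick subcategories containing $R$, whereas the paper passes through injectivity of $g$, so your version is slightly more direct and equally valid.
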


\begin{proof}
Let $\pi:\db(R)\to\ds(R)$ stand for the canonical functor.
We divide the proof into three parts.

(1) Let us show that $s,t$ are mutually inverse bijections.
Fix a proxy small subcategory $\X$ of $\db(R)$ with $\X\cap\dpf(R)=\E$, and a specialization-closed subset $W$ of $\sing R$ contained in $\supp\E$.
Using Theorem \ref{3}(2), we see that $\ipd(\X)=\ssupp(\pi\X)\subseteq\supp\X=\supp\E$.
By the mutually inverse bijections $\sigma$ and $\tau$ given in Theorem \ref{5}(2), we have $\sigma(\X)=(\supp\E,s(\X))$ and $t(W)=\tau(\supp\E,W)$.
Also, the subcategory
$$
t(W)\cap\dpf(R)=\{X\in\dpf(R)\mid\supp X\subseteq\supp\E,\,\ipd(X)\subseteq W\}
$$
obviously contains $\E$, and is contained in $\E$ by Theorem \ref{17}.
We get $t(W)\cap\dpf(R)=\E$.
Thus, the maps $s,t$ are well-defined.
There are equalities
$$
\X=\tau\sigma(\X)=\tau(\supp\E,s(\X))=ts(\X),\quad
(\supp\E,st(W))=\sigma(t(W))=\sigma\tau(\supp\E,W)=(\supp\E,W).
$$
We obtain $ts=1$ and $st=1$.
Consequently, the maps $s$ and $t$ are mutually inverse bijections.

(2) Let us show that $f,g$ are mutually inverse bijections.
The mutually inverse bijections given in Theorem \ref{5}(1) induce a one-to-one correspondence between the thick subcategories $\Y$ of $\db(R)$ with $R\in\Y$ and $\ipd(\Y)\subseteq\supp\E$ and the specialization-closed subsets $W$ of $\sing R$ with $W\subseteq\supp\E$.
Splicing this one-to-one correspondence with the one given in (1), we get mutually inverse bijections
$$
\xymatrix{
{\left\{\begin{matrix}\text{proxy small subcategories $\X$}\\
\text{of $\db(R)$ with $\X\cap\dpf(R)=\E$}\end{matrix}\right\}}
\ar@<.7mm>[r]^-f& 
{\left\{\begin{matrix}
\text{thick subcategories $\Y$ of $\db(R)$}\\
\text{with $R\in\Y$ and $\ipd(\Y)\subseteq\supp\E$}
\end{matrix}\right\}\!,}
\ar@<.7mm>[l]^-h}
$$
where $h$ is the map defined by the equality $h(\Y)=\{X\in\db(R)\mid\supp X\subseteq\supp\E\text{ and }\ipd(X)\subseteq\ipd(\Y)\}$ for each thick subcategory $\Y$ of $\db(R)$ with $R\in\Y$ and $\ipd(\Y)\subseteq\supp\E$.
Applying Theorem \ref{5}(1) again, we easily observe that $h(\Y)=g(\Y)$.
Thus $h=g$.
We conclude that $f,g$ are mutually inverse bijections.

(3) Let us show the final assertion of the theorem.
Let $\X$ be a proxy small subcategory of $\db(R)$ with $\X\cap\dpf(R)=\E$.
Put $\ZZ=\thick_{\db(R)}(\X\cup\dpf(R))$.
Evidently, $\ZZ$ contains $R$.
It is easy to observe that
$$
\ipd(\ZZ)=\ipd(\X\cup\dpf(R))=\ipd(\X)\subseteq\supp\X=\supp\E.
$$
Thus $\ZZ$ belongs to the third set in the one-to-one correspondences \eqref{30}.
Since $f(\X)$ contains $\ZZ$, we have
$$
\X=gf(\X)\supseteq g(\ZZ)=\{C\in\ZZ\mid\supp C\subseteq\supp\E\}\supseteq\X.
$$
It follows that $gf(\X)=g(\ZZ)$.
By the injectivity of the map $g$, we obtain $f(\X)=\ZZ$.
\end{proof}

\begin{rem}
For $\E=\dpf(R)$, the second and third sets in the one-to-one correspondences \eqref{30} equal to the set of thick subcategories of $\db(R)$ containing $R$, and $f,g$ equal to the identity map by Theorem \ref{5}(1).
\end{rem}

Let $E$ be a subset of $\spec R$.
We denote by $\dbe(R)$ the subcategory of $\db(R)$ consisting of objects $X$ such that $\supp X$ is contained in $E$, and set $\dpfe(R)=\dpf(R)\cap\dbe(R)$.
We denote by $\dse(R)$ the subcategory of $\db(R)$ consisting of objects $X$ such that $\ssupp X$ is contained in $E$, where $\pi:\db(R)\to\ds(R)$ stands for the canonical functor.
It is possible to interpret the one-to-one correspondence in Theorem \ref{6}(2) as follows.

\begin{cor}
Let $R$ be a locally dominant ring.
Let $E$ be a specialization-closed subset of $\spec R$.
Then the assignments $\X\mapsto\thick_{\db(R)}(\X\cup\{R\})$ and $\Y\cap\dbe(R)\mapsfrom\Y$ give mutually inverse bijections
$$
{\left\{\begin{matrix}\text{thick subcategories $\X$ of $\db(R)$}\\
\text{with $\dpfe(R)\subseteq\X\subseteq\dbe(R)$}\end{matrix}\right\}}
\cong
{\left\{\begin{matrix}
\text{thick subcategories $\Y$ of $\db(R)$}\\
\text{with $\dpf(R)\subseteq\Y\subseteq\pi^{-1}\dse(R)$}
\end{matrix}\right\}\!.}
$$
\end{cor}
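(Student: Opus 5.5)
We would deduce this corollary from Theorem \ref{6} applied to $\E=\dpfe(R)$. Since $E$ is specialization-closed, Theorem \ref{17} shows that $\E=\{P\in\dpf(R)\mid\supp P\subseteq E\}$ is a thick subcategory of $\dpf(R)$ with $\supp\E=E$.

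The first step is to identify the left-hand set with the second set in \eqref{30}. By Theorem \ref{3}(2), the proxy small subcategories $\X$ with $\X\cap\dpf(R)=\E$ are exactly the thick subcategories $\X$ with $\E\subseteq\X$ and $\supp\X=\supp\E=E$. Such an $\X$ satisfies $\dpfe(R)\subseteq\X\subseteq\dbe(R)$. Conversely, suppose $\dpfe(R)\subseteq\X\subseteq\dbe(R)$. Then $E=\supp\E\subseteq\supp\X\subseteq E$, so $\supp\X=E$ and $\X$ lies in the middle set.

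The second step is to identify the right-hand set with the third set in \eqref{30}. Note that $\pi^{-1}\dse(R)=\{Y\in\db(R)\mid\ipd(Y)\subseteq E\}$, using $\ssupp(\pi Y)=\ipd(Y)$. A thick subcategory $\Y$ contains $\dpf(R)$ if and only if $R\in\Y$, because $\dpf(R)=\thick_{\db(R)}R$. Also, $\Y\subseteq\pi^{-1}\dse(R)$ if and only if $\ipd(\Y)\subseteq E$. So the two sets coincide.

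Finally we match the maps. Theorem \ref{6} gives $f(\X)=\thick_{\db(R)}(\X\cup\{R\})$, and $g(\Y)=\{X\in\Y\mid\supp X\subseteq E\}=\Y\cap\dbe(R)$. Hence the bijections $f,g$ of Theorem \ref{6} are exactly the stated assignments. The only points needing care are that $\supp\E=E$, which is where specialization-closedness of $E$ and Theorem \ref{17} are used, and the identification $\ssupp(\pi Y)=\ipd(Y)$. Neither is a real obstacle; the rest is bookkeeping.
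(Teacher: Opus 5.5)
Your proposal is correct and follows essentially the same route as the paper. Both set $\E=\dpfe(R)$, use Theorem \ref{17} to get $\supp\E=E$, identify the two sets with those in Theorem \ref{6} via Theorem \ref{3}(2)(b)$\Leftrightarrow$(c) and $\ssupp(\pi Y)=\ipd(Y)$, and read off the maps from $f,g$; you also spell out why the assignments match ($f(\X)=\thick_{\db(R)}(\X\cup\{R\})$ by the last assertion of Theorem \ref{6}, and $g(\Y)=\Y\cap\dbe(R)$), which the paper leaves implicit.
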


\begin{proof}
Fix thick subcategories $\X$ and $\Y$ of $\db(R)$.
Set $\E=\dpfe(R)$.
By the equivalence (b)$\Leftrightarrow$(c) in Theorem \ref{3}(2) and the one-to-one correspondence $(f,g)$ in Theorem \ref{6}, it suffices to show the following equivalences.
\begin{enumerate}
\item
$\E\subseteq\X\text{ and }\supp\X=\supp\E\iff\dpfe(R)\subseteq\X\subseteq\dbe(R)$.
\item
$R\in\Y\text{ and }\ipd(\Y)\subseteq\supp\E\iff\dpf(R)\subseteq\Y\subseteq\pi^{-1}\dse(R)$.
\end{enumerate}
Theorem \ref{17} implies that there is an equality $\supp\E=E$.
When $\X$ contains $\E$, the equivalences below hold.
$$
\supp\E=\supp\X
\iff\supp\E\supseteq\supp\X
\iff E\supseteq\supp\X
\iff\X\subseteq\dbe(R).
$$
Thus (1) follows.
Since $\Y$ is thick, it contains $R$ if and only if it contains $\dpf(R)$.
There are equivalences
$$
\Y\subseteq\pi^{-1}\dse(R)
\iff\ssupp(\pi\Y)\subseteq E
\iff\ipd(\Y)\subseteq E
\iff\ipd(\Y)\subseteq\supp\E,
$$
where $\pi:\db(R)\to\ds(R)$ stands for the canonical functor.
Now we observe that (2) holds.
\end{proof}

\begin{ac}
The authors thank Hiroki Matsui for asking the second author whether the condition \cite[Lemma 4.2(1)]{proxy} characterizes proxy smallness; this is answered in the affirmative in Corollary \ref{2}.
\end{ac}


\end{document}